 \newtheorem{thm}{Theorem}[section]
 \newtheorem{prop}[thm]{Proposition}
 \newtheorem{lem}[thm]{Lemma}
 \newtheorem{cor}[thm]{Corollary}
\theoremstyle{definition}
 \newtheorem{dfn}[thm]{Definition}
\theoremstyle{remark}
 \newtheorem{rem}[thm]{Remark}
 \numberwithin{equation}{section}
\theoremstyle{definition}
\theoremstyle{remark}
 \numberwithin{equation}{section}
\renewcommand{\le}{\leqslant}\renewcommand{\leq}{\leqslant}
\renewcommand{\ge}{\geqslant}\renewcommand{\geq}{\geqslant}
\newcommand{\bbQ}{\mathbb{Q}}
\newcommand{\bbZ}{\mathbb{Z}}   
\renewcommand{\and}{\quad \mbox{and} \quad}  
\renewcommand{\le}{\leqslant}\renewcommand{\leq}{\leqslant}
\renewcommand{\ge}{\geqslant}\renewcommand{\geq}{\geqslant}
\title{Computation of the Lambda function for a finite Galois extension}
\subjclass[2010]{11S37; 22E50\\Keywords: Local fields, Local constants, Lambda functions}
\author[Biswas]{\bfseries Sazzad Ali Biswas}
\address{
Chennai Mathematical Institute\\ 
H1, Sipcot It Park, Siruseri  \\ 
Kelambakkam, 603103\\
India}
\email{sabiswas@cmi.ac.in, sazzad.jumath@gmail.com}
\thanks{The  author is partially supported by IMU-Berlin Einstein Foundation,
Berlin, Germany and CSIR, Delhi, India }
\begin{document}

\vspace{18mm}
\setcounter{page}{1}
\thispagestyle{empty}

\begin{abstract}
By Langlands \cite{RL}, and Deligne \cite{D1} we know that the local constants are extendible functions.
Therefore, to give an explicit formula of the local constant of an induced representation of a 
local Galois group of a 
non-Archimedean local field $F$ of characteristic zero,
we have to compute the lambda function $\lambda_{K/F}$ 
for a finite extension $K/F$. In this paper, when a finite extension
$K/F$ is Galois, we give a formula for $\lambda_{K/F}$.
 
\end{abstract}

\maketitle

\section{\textbf{Introduction}}

Let $F$ be a non-Archimedean local field of characteristic zero. Let $\bar{F}$ be 
an algebraic closure of the field $F$. Consider a tower of fields $\bar{F}/K/F$ ($K/F$ is finite, but need 
not be Galois). Put $G_F:=\text{Gal}(\bar{F}/F)$, $G_K:=\text{Gal}(\bar{F}/K)$. Denote by $\rho_K$ a complex 
representation of the group $G_K$ of dimension $\text{dim}(\rho_K)$.
Langlands (cf. \cite{RL}) associated a (local) constant 
$W(\rho_F,\psi_F)$ of absolute value $1$ to every continuous finite dimensional complex 
representation $\rho_F$ of the group $G_F$. It satisfies 
\begin{equation}\label{eqn 0.1}
 W(\text{Ind}_{G_K}^{G_F}(\rho_K),\psi_F)
 =\lambda_{K/F}(\psi_F)^{\text{dim}(\rho_K)}\cdot W(\rho_K,\psi_K).
\end{equation}
Here $\psi_F$ is any nontrivial additive character of the field $F$ and 
$\psi_K:=\psi_F\circ\text{Tr}_{K/F}$, $\text{Ind}_{G_K}^{G_F}(\rho_K)$ is the representation of $G_F$ induced 
from $\rho_K$, and 
$$\lambda_{K/F}(\psi_F):=W(\text{Ind}_{G_K}^{G_F}(1_K),\psi_F),$$
where $1_K$ is the trivial representation of $G_K$. When the additive character $\psi_F$ is canonical 
(cf. \cite{JT1}, p. 92), for the local constants we simply write $W(\rho)$ instead of writing
$W(\rho,\psi_F)$ where $\rho$ is any finite dimensional complex representation of $G_F$.
The function $\lambda_{K/F}(\psi_F)$ is 
called {\bf Langlands'}
$\lambda$-function or simply $\lambda$-function.
We also can define the $\lambda$-function
via Deligne's constant $c(\rho):=\frac{W(\rho)}{W(\det(\rho))}$, where $\rho$ is a finite dimensional 
representation of $G_F$ and 
$\det(\rho)$ is the determinant of $\rho$ (see equations (\ref{eqn 3.4}), (\ref{eqn 4.4})).

In this paper, we use Langlands' convention for the local constants.
There are two other conventions for the local
constants, due to Deligne and to Bushnell $\&$ Henniart
(cf. \cite{BH}). In \cite{SABT}, Subsection 2.3.2, pp. 21-23, one can see the relations between 
the different conventions for the local constants.

The local constants were first introduced by John Tate in his 1950 Ph.D. thesis, for linear characters of 
local fields. They appear in his famous local functional equation (cf. \cite{JT2}, p. 13, equation (3.2.1))
of the local abelian $L$-function. In \cite{RL}, Langlands extended this to finite dimensional complex 
representations of a local Galois group. In 1972, Deligne (cf. \cite{D1}) showed that local constants can 
be attached to finite dimensional complex representations of local Galois groups by an elegant global method.

The local Langlands correspondence preserves local constants so the explicit computation of local 
constants is an integral part of the Langlands program. In \cite{RL}, Langlands proved that the local 
constants are extendible functions (cf. Theorem 1 on p. 105 of \cite{JT1}). 
Therefore, to compute the local constant of any induced local Galois representation, 
we have to compute the $\lambda$-function explicitly because here we need to use equation (\ref{eqn 0.1}).

Further, if we know an explicit formula for the local constants of the Galois
representations, then by using these computations we can compute global
constants, because the global constant is a product of local constants. The global
Langlands conjecture predicts that the global constant (which appears in the
functional equation for an Artin L-function) will be preserved under the global
Langlands correspondence and it should be compatible to the local Langlands correspondence at every place. 
Thus an explicit formula for the local constants will lead to
information concerning the global Langlands correspondence.

In \cite{GH}, G. Henniart has computed the $\lambda$-functions for all odd degree local extensions of a 
non-Archimedean local field of characteristic zero (cf. Proposition 2 on p. 124 of \cite{GH}). In \cite{TS},
T. Saito has shown that the Henniart's formula regarding $\lambda$-function  for odd degree extension 
is a consequence of results of J.-P. Serre (cf. \cite{JPS}) and Deligne (cf. \cite{D2}). In \cite{TS}, Saito
also has computed the $\lambda$-function for an arbitrary extension assuming the residual characteristic
of the base field is not equal to 2 (cf. Theorem on p. 508 of \cite{TS}). 
In this paper, we also compute these $\lambda$-functions for finite
Galois extensions (except the quadratic case) and our computations are more explicit than the previous results
of Saito.

Firstly, in Section 3, we compute the $\lambda$-function for odd degree Galois extension by using some 
properties of the $\lambda$-functions (cf. \cite{SABT}, Lemma 2.2.2 on p. 14) 
and Lemmas \ref{Lemma 3.3} and \ref{Lemma 4.2}, then
 we obtain the following result (cf. Theorem \ref{General Theorem for odd case}).
\begin{thm}\label{General Theorem for odd case} 
Let $E/F$ be an odd degree Galois extension of a non-Archimedean local field $F$. If 
$L\supset K\supset F $ is any finite extension inside $E$, then $\lambda_{L/K}=1$. 
\end{thm}

And in Section 4, we compute $\lambda_{1}^{G}:=W(\text{Ind}_{\{1\}}^{G}(1))$, where $G$ is a local Galois group for a
finite Galois extension.
By using Bruno Kahn's results (cf. \cite{BK}, Theorem 1) and Theorem \ref{Theorem 4.1} (due to Deligne)
we obtain the following result (cf. Theorem \ref{Theorem 4.3}).
\begin{thm}\label{Theorem 4.3}
 Let $G$ be a finite local Galois group of a non-Archimedean local field $F$. Let $S$ be a Sylow 2-subgroup of $G$. 
 Denote $c_{1}^{G}=c(\text{Ind}_{1}^{G}(1))$.
 \begin{enumerate}
 \item If $S=\{1\}$, then we have $\lambda_{1}^{G}=1$. 
  \item If the Sylow 2-subgroup $S\subset G$ is nontrivial cyclic (\textbf{exceptional case}), then
  \begin{equation*}
   \lambda_{1}^{G}=\begin{cases}
                    W(\alpha) & \text{if $|S|=2^n\ge 8$}\\
                    c_{1}^{G}\cdot W(\alpha) & \text{if $|S|\le 4$},
                   \end{cases}
\end{equation*}
where $\alpha$ is a uniquely determined quadratic 
  character of $G$.
  \item If $S$  is metacyclic but not cyclic ({\bf invariant case}), then 
  \begin{equation*}
   \lambda_{1}^{G}=\begin{cases}
                    \lambda_{1}^{V} & \text{if $G$ contains Klein's $4$ group $V$}\\
                    1 &  \text{if $G$ does not contain Klein's $4$ group $V$}.
                   \end{cases}
\end{equation*}
  \item If $S$ is nontrivial and not metacyclic, then $\lambda_{1}^{G}=1$.
 \end{enumerate}
\end{thm}
In the above theorem we observe that $\lambda_{1}^{G}=1$, except the {\bf exceptional case}  and 
the {\bf invariant case} when $G$ contains Klein's $4$-group.
Moreover, if $\alpha$ is the uniquely determined quadratic character of $G$, then $W(\alpha)=\lambda_{F_2/F}$, 
where $F_2/F\subseteq K/F$ is the 
quadratic extension corresponding to $\alpha$. In fact, in the invariant case we need to compute 
$\lambda_{1}^{V}$ where $V$ is Klein's $4$-group. If $p\ne 2$ then $V$ corresponds to a 
tame extension and in this paper we obtain  
an explicit computation of $\lambda_{1}^{V}$ in Lemma \ref{Lemma 4.6}.

Furthermore, in Appendix, we give an explicit formula 
for $\lambda_{K/F}$, where $K/F$ is a finite Galois extension of even degree 
with odd ramification index.

\section{\textbf{Notations and Preliminaries}}

Let $F$  be a non-Archimedean local field of characteristic zero, i.e., 
a finite extension of the field $\mathbb{Q}_p$ (field of $p$-adic numbers),
where $p$ is a prime.
Let $K/F$ be a finite extension of the field $F$. Let $e_{K/F}$ be the ramification index of the 
extension $K/F$ and $f_{K/F}$ be 
the residue degree of the extension $K/F$. The extension $K/F$ is called unramified 
if $e_{K/F}=1$. The extension $K/F$ is totally ramified if 
$e_{K/F}=[K:F]$. 
Let
$q_F$ be the cardinality of the residue field $k_F$ of $F$. If $\text{gcd}(p,[K:F])=1$, then the extension 
$K/F$ is called tamely ramified, otherwise wildly ramified. 
The extension $K/F$ is totally tamely ramified
if it is both totally ramified and tamely ramified.

For a tower of {\bf local} fields $K/L/F$, we have (cf. \cite{FV}, p. 39, Lemma 2.1) 
\begin{equation}
 e_{K/F}(\nu_K)=e_{K/L}(\nu_K)\cdot e_{L/F}(\nu_L),
\end{equation}
where $\nu_K$ is a valuation on $K$ and $\nu_L$ is the induced 
valuation on $L$, i.e., $\nu_L=\nu_K|_{L}$. For 
the tower of fields $K/L/F$ we simply write $e_{K/F}=e_{K/L}\cdot e_{L/F}$.
Let $O_F$ be the 
ring of integers in the local field $F$ and $P_F=\pi_F O_F$ is the unique prime ideal in $O_F$ 
and $\pi_F$ is a uniformizer, i.e., an element in $P_F$ whose valuation is one, i.e.,
 $\nu_F(\pi_F)=1$.
Let $U_F=O_F-P_F$ be the group of units in $O_F$.
Let $P_{F}^{i}=\{x\in F:\nu_F(x)\geq i\}$ and for $i\geq 0$ define $U_{F}^i=1+P_{F}^{i}$
(with proviso $U_{F}^{0}=U_F=O_{F}^{\times}$).
We also consider that $a(\chi)$ is the {\bf conductor} of 
 nontrivial character $\chi: F^\times\to \mathbb{C}^\times$, i.e., $a(\chi)$ is the smallest integer $m\geq 0$ such 
 that $\chi$ is trivial
 on $U_{F}^{m}$. We say $\chi$ is unramified if the conductor of $\chi$ is zero and otherwise ramified.
Throughout the paper when $K/F$
is unramified we choose uniformizers $\pi_K=\pi_F$. And when $K/F$ is totally ramified (both tame and wild) we choose
uniformizers $\pi_F=N_{K/F}(\pi_K)$, where $N_{K/F}$ is the norm map from $K^\times$ to $F^\times$.
In this paper, $\Delta_{K/F}$ denotes
$\det(\text{Ind}_{K/F} (1))$.

 The conductor of any nontrivial additive character $\psi$ of the field $F$ is an integer
$n(\psi)$ if $\psi$ is trivial
on $P_{F}^{-n(\psi)}$, but nontrivial on $P_{F}^{-n(\psi)-1}$.

\subsection{Local constants}

For a nontrivial multiplicative character $\chi$ of $F^\times$ and nontrivial additive character $\psi$ of $F$, we have 
\begin{equation}\label{eqn 2.4}
 W(\chi,\psi,c)=\chi(c)\frac{\int_{U_F}\chi^{-1}(x)\psi(x/c) dx}{|\int_{U_F}\chi^{-1}(x)\psi(x/c) dx|},
\end{equation}
where the Haar measure $dx$ is normalized such that the measure of $O_F$ is $1$ and 
$c\in F^\times$ with valuation $n(\psi)+a(\chi)$.
The formula (\ref{eqn 2.4}) can be modified as follows (cf. \cite{JT1}, p. 94):
\begin{equation}\label{eqn 2.5}
 W(\chi,\psi,c)=\chi(c)q_F^{-a(\chi)/2}\sum_{x\in\frac{U_F}{U_{F}^{a(\chi)}}}\chi^{-1}(x)\psi(x/c),
\end{equation}
where $c=\pi_{F}^{a(\chi)+n(\psi)}$. Now if $u\in U_F$ is a unit and if we replace $c$ by $cu$, then we would have 
\begin{equation}
W(\chi,\psi,cu)=\chi(c)q_F^{-\frac{a(\chi)}{2}}\sum_{x\in\frac{U_F}{U_{F}^{a(\chi)}}}\chi^{-1}(x/u)\psi(x/cu)=W(\chi,\psi,c).
\end{equation}
Therefore, $W(\chi,\psi,c)$ depends only on the exponent $\nu_{F}(c)=a(\chi)+n(\psi)$. Thus we 
can write $W(\chi,\psi, c)=W(\chi,\psi)$, because $c$ is determined by 
$\nu_F(c)=a(\chi)+n(\psi)$ up to a unit $u$ which has \textbf{no influence on} $W(\chi,\psi,c)$.
If $\chi$ is unramified, i.e., $a(\chi)=0$, then $\nu_F(c)=n(\psi)$. Then from the formula of $W(\chi,\psi,c)$, we can write
\begin{equation}\label{eqn 2.3.5}
 W(\chi,\psi,c)=\chi(c),
\end{equation}
and therefore, $W(1,\psi,c)=1$ if $\chi=1$ is the trivial character.

We know that this local constant satisfies the following functional equation 
(cf. \cite{JT2}):
$$W(\chi,\psi)\cdot W(\chi^{-1},\psi)=1.$$ 
This functional equation extends to 
\begin{equation}\label{eqn 2.3.23}
 W(\rho,\psi)\cdot W(\rho^{V},\psi)=\det(\rho)(-1),
\end{equation}
where $\rho$ is any virtual finite-dimensional representation of the Weil group $W_F$, $\rho^{V}$ is the 
contragredient and $\psi$ is any nontrivial additive character of $F$. 
This is formula (3) on p. 190 of \cite{BH} for $s=\frac{1}{2}$.

\subsection{\textbf{Deligne's Constants}}

Let $K/F$ be a finite Galois extension of a local field $F$ of characteristic zero. Let $G=\text{Gal}(K/F)$, and let 
$\rho: G\to\text{Aut}_{\mathbb{C}}(V)$ be a representation. Then for this representation, 
Deligne (cf. \cite{JT1}, p. 119)
defines:
\begin{equation}
 c(\rho):=\frac{W(\rho,\psi)}{W(\det(\rho),\psi)},
\end{equation}
where $\psi$ is some additive character of $F$. If we change the additive character $\psi$ to $\psi'=b\psi$, where 
$b\in F^\times$, then from \cite{BH}, p. 190,
part (2) of the Proposition, we see:
\begin{equation}\label{eqn 3.1.1}
W(\rho,b\psi)=\epsilon(\rho,\frac{1}{2},b\psi)=\det(\rho)(b)\cdot \epsilon(\rho,\frac{1}{2},\psi)
 =\det(\rho)(b)W(\rho,\psi).
\end{equation}
Also, from the property of abelian local constants we have 
$W(\det(\rho), b\psi)=\det(\rho)(b)\cdot W(\det(\rho), \psi)$, hence
\begin{center}
 $\frac{W(\rho,\ b\psi)}{W(\det(\rho),\ b\psi)}=\frac{W(\rho,\ \psi)}{W(\det(\rho),\ \psi)}=c(\rho)$.
\end{center}
This shows that the Deligne's constant $c(\rho)$ does not depend on the choice of the additive character $\psi$.

 \section{\textbf{When $K/F$ is an odd degree Galois extension}}

Let $K/F$ be a finite Galois extension of the field $F$.
It is well known (cf. \cite{BH}, Corollary 30.4 on p. 194) that the $\lambda_{K/F}$-function
is always a \textbf{fourth} root of unity.
We also have the following result due to Gallagher.
\begin{thm}[\cite{BH}, p. 188]\label{Lemma 2.5}
If $\rho$ is a (virtual) representation of $H\subset G$, then 
\begin{equation}\label{eqn 2.3}
 \det(\text{Ind}_{H}^{G}\rho)(s)=
 \Delta_{H}^{G}(s)^{\text{dim}(\rho)}\cdot (\det(\rho)\circ T_{G/H}(s)),
\end{equation}
for $s\in G$. Here $T_{G/H}$ is the transfer map from $G$ to $H$ and 
$\Delta_{H}^{G}=\det(\text{Ind}_{H}^{G}1_H)$.
\end{thm}

We assume now that the Galois groups $H\subset G$ have the fields $K\supset F$ as their base fields. Then by class field theory
we may interpret $\det(\rho)$ of equation (\ref{eqn 2.3})
as a character of $K^\times$ and $\det(\text{Ind}_{H}^{G}\rho)$ as a character of $F^\times$, and then the equation 
(\ref{eqn 2.3}) turns into an equality of two characters of $F^\times$:
\begin{equation}\label{eqn 2.12}
 \det(\text{Ind}_{H}^{G}\rho)=\Delta_{K/F}^{\text{dim}(\rho)}\cdot \det(\rho)|_{F^\times},\quad
 \text{where}\quad \Delta_{K/F}:F^\times\to\{\pm1\}
\end{equation}
is the discriminant character
 with respect to the extension $K/F$. If we consider $Z\subset H\subset G$ corresponding to the base fields
$E\supset K\supset F$ then we have 
\begin{center}
 $\Delta_{E/F}=\det(\text{Ind}_{H}^{G}(\text{Ind}_{Z}^{H}1_Z))$,
\end{center}
and with $\rho=\text{Ind}_{Z}^{H}1_Z$ we conclude from (\ref{eqn 2.12}) that 
\begin{equation}\label{eqn 2.13}
 \Delta_{E/F}=\Delta_{E/K}|_{F^\times}\cdot\Delta_{K/F}^{[E:K]}.
\end{equation}

Moreover, in terms of Deligne's constant, we can write:
\begin{equation}\label{eqn 3.4}
 \lambda_{H}^{G}:=W(\text{Ind}_{H}^{G}1_H)=
 c(\text{Ind}_{H}^{G}1_H)\cdot W(\text{det}(\text{Ind}_{H}^{G}1_H))=c_{H}^{G}\cdot W(\Delta_{H}^{G}),
\end{equation}
where $c_{H}^{G}:=c(\text{Ind}_{H}^{G}1_H)$.

Replacing Galois groups by the corresponding local fields we may write the lambda function of the finite extension $K/F$ as 
\begin{equation}\label{eqn 4.4}
 \lambda_{K/F}=c(\text{Ind}_{K/F}1)\cdot W(\Delta_{K/F}),
\end{equation}
where $c(\text{Ind}_{K/F}1)$ is Deligne's sign, and $\Delta_{K/F}$ is a quadratic character of $F^\times$ related to 
the discriminant.

From the following lemma, we can see that 
the $\lambda$-function can change by a sign if we change the additive character. 
\begin{lem}\label{Lemma 3.2}
The $\lambda$-function can change by sign if we change the additive character.
\end{lem}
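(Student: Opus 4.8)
The plan is to exploit the transformation formula for local constants under change of additive character, namely equation (\ref{eqn 3.1.1}): for a virtual representation $\rho$ of $W_F$ and $b\in F^\times$, one has $W(\rho,b\psi)=\det_\rho(b)\cdot W(\rho,\psi)$. Applying this to $\rho=\mathrm{Ind}_{K/F}1$ and using $\det(\mathrm{Ind}_{K/F}1)=\Delta_{K/F}$ from (\ref{eqn 2.12}), I would write
\begin{equation*}
 \lambda_{K/F}(b\psi)=W(\mathrm{Ind}_{K/F}1,b\psi)=\Delta_{K/F}(b)\cdot W(\mathrm{Ind}_{K/F}1,\psi)=\Delta_{K/F}(b)\cdot\lambda_{K/F}(\psi).
\end{equation*}
Since $\Delta_{K/F}:F^\times\to\{\pm1\}$ is a quadratic character, the factor $\Delta_{K/F}(b)$ is $\pm1$, so replacing $\psi$ by $b\psi$ changes $\lambda_{K/F}$ at most by a sign; and it genuinely changes by $-1$ precisely when $K/F$ is ramified (so that $\Delta_{K/F}$ is nontrivial) and $b$ is chosen outside $\ker\Delta_{K/F}$, i.e. $b$ is not in the norm group of the quadratic subextension cut out by the discriminant.

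The key steps, in order, are: (1) recall that every nontrivial additive character of $F$ is of the form $b\psi_F$ for a unique $b\in F^\times$, so "changing the additive character" means multiplying by such a $b$; (2) invoke (\ref{eqn 3.1.1}) with $\rho=\mathrm{Ind}_{K/F}1$; (3) identify $\det_\rho=\Delta_{K/F}$ via (\ref{eqn 2.12}) with $\rho$ the trivial character of $\mathrm{Gal}(K/K)$; (4) observe $\Delta_{K/F}(b)\in\{\pm1\}$ and conclude. To see the change is nontrivial in general it suffices to give one example: take $K/F$ a ramified quadratic extension, so $\Delta_{K/F}$ is the nontrivial quadratic character of $F^\times$ with kernel $N_{K/F}(K^\times)$, and pick $b\in F^\times\setminus N_{K/F}(K^\times)$; then $\lambda_{K/F}(b\psi_F)=-\lambda_{K/F}(\psi_F)$.

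I do not expect a serious obstacle here; the only point requiring a little care is making sure the cited transformation law (\ref{eqn 3.1.1}), stated in the excerpt for representations $\rho$ via $\epsilon(\rho,\tfrac12,b\psi)=\det_\rho(b)\epsilon(\rho,\tfrac12,\psi)$, applies to the (virtual) representation $\mathrm{Ind}_{K/F}1$ and that its determinant is correctly identified with the discriminant character $\Delta_{K/F}$ rather than with $\det$ of the trivial character alone — this is exactly the content of (\ref{eqn 2.12}) with $\mathrm{dim}(\rho)=1$ and $\det\rho=1$, giving $\det(\mathrm{Ind}_{K/F}1)=\Delta_{K/F}$. Once that identification is in hand the lemma is immediate, and it also pins down the precise mechanism: the sign ambiguity is governed entirely by the discriminant character, hence is trivial exactly for those extensions with $\Delta_{K/F}\equiv1$ (e.g. unramified extensions, or odd-degree extensions as in Lemma \ref{Lemma 4.1}).
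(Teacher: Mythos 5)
Your proposal is correct and follows essentially the same route as the paper: apply the change-of-character formula (\ref{eqn 3.1.1}) to $\rho=\mathrm{Ind}_{K/F}1$, identify $\det(\mathrm{Ind}_{K/F}1)=\Delta_{K/F}$, and conclude $\lambda_{K/F}(b\psi)=\Delta_{K/F}(b)\lambda_{K/F}(\psi)$ with $\Delta_{K/F}(b)\in\{\pm1\}$. Your added remark pinning down exactly when the sign change is nontrivial (ramified quadratic discriminant character, $b$ outside its kernel) is a harmless refinement beyond what the paper states.
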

\begin{proof}
Let $K/F$ be a finite separable extension of the field $F$ and let $\psi$ be a nontrivial additive character of $F$. 
We know that the local constant $W(\rho,\psi)$ is well defined for all pairs consisting of a virtual representation $\rho$ 
of the Weil group
 $W_F$ and a nontrivial additive character $\psi$ of $F$. If we change the additive character $\psi$ to $b\psi$, where $b\in F^\times$
 is a unique element for which $b\psi(x):=\psi(bx)$ for all $x\in F$, then from
 equation (\ref{eqn 3.1.1}), we have 
 \begin{equation}\label{eqn 3.13}
  W(\rho,b\psi)=\det(\rho)(b)\cdot W(\rho,\psi).
 \end{equation}
In the definition of $\lambda$-function $\rho=\text{Ind}_{K/F} 1$, therefore, by using equation (\ref{eqn 3.13}), we have
\begin{equation}\label{eqn 3.121}
 \lambda_{K/F}(b\psi)=W(\text{Ind}_{K/F} 1, b\psi)=\Delta_{K/F}(b)W(\text{Ind}_{K/F} 1,\psi)=\Delta_{K/F}(b)\lambda_{K/F}(\psi),
\end{equation}
where $\Delta_{K/F}=\det(\text{Ind}_{K/F} (1))$ is a quadratic character (a sign function),
i.e., $\Delta_{K/F}(b)\in\{\pm 1\}$. 
 
\end{proof}

\begin{lem}\label{Lemma 3.3}
Let $L/F$ be a finite Galois extension of a non-Archimedean local field $F$ which contains $K$ 
and $G=\text{Gal}(L/F)$, $H=\text{Gal}(L/K)$. 
If $H\leq G$ is a normal subgroup and if $[G:H]$ is odd, then $\Delta_{K/F}\equiv 1$ and 
$\lambda_{K/F}^{2}=1.$
\end{lem}

\begin{proof}
 If $H$ is a normal subgroup, then $\text{Ind}_{H}^{G}1_H=\text{Ind}_{\{1\}}^{G/H}1$ is the regular representation of 
 $G/H$, hence $\det(\text{Ind}_{H}^{G}1_H)=\Delta_{K/F}$ is the quadratic character of the group $G/H$.
 By the given condition order of $G/H$ is odd, then $\Delta_{K/F}\equiv 1$, hence
  $\lambda_{K/F}^{2}=\Delta_{K/F}(-1)$. Thus $\lambda_{K/F}^{2}=1$. 
\end{proof}
{\bf Note:} Since $\Delta_{K/F}\equiv 1$, $W(\Delta_{K/F})=1$. We also know that $c(\text{Ind}_{K/F}(1))\in\{\pm 1\}$.
Then from equation (\ref{eqn 4.4}) we can easily see that $\lambda_{K/F}^{2}=1$.

In the next lemma we state 
some important results for our next Theorem \ref{General Theorem for odd case}. These are the 
consequences of the following Deligne's result for the local constant of orthogonal representations.
For an orthogonal representation $\rho:G\to O(n)$,
we know a procedure how to obtain the constant $c(\rho)$ from the second Stiefel-Whitney class $s_2(\rho)$.

\begin{thm}[Deligne, \cite{JT1}, p. 129, Theorem 3]\label{Theorem 4.1}
 Let $\rho$ be an \textbf{orthogonal representation} of a finite group $G$ and let $s_2(\rho)\in H^{2}(G,\mathbb{Z}/2\mathbb{Z})$
 be the second Stiefel-Whitney class of $\rho$. 
 Let $K/F$ be a finite Galois extension of the non-Archimedean local field $F$. 
 The Galois group $G=\mathrm{Gal}(K/F)$ is a quotient group of the full Galois group 
 $G_{F}=\mathrm{Gal}(\bar{F}/F)$ which induces an inflation map
 \begin{equation}\label{eqn 32}
\mathrm{Inf}: H^{2}(G,\mathbb{Z}/2\mathbb{Z})\rightarrow H^{2}(G_{F},\mathbb{Z}/2\mathbb{Z})=\text{Br}(F)_2\cong \{\pm 1\}.
 \end{equation}
Then 
\begin{equation}
 c(\rho)=cl(s_2(\rho))\in\{\pm 1\}.
\end{equation}
Here $cl(s_2(\rho))$ denotes for the image of $s_2(\rho)$ under the composition of these 
maps (\ref{eqn 32}), and $\text{Br}(F)_2$ denotes the 2-part of the Brauer group of $F$.\\
In particular, we have $c(\rho)=1$ if $s_2(\rho)=0\in H^{2}(G,\mathbb{Z}/2\mathbb{Z})$.
\end{thm}

\begin{lem}\label{Lemma 4.2}
Let $G$ be a local Galois group.
\begin{enumerate}
 \item If $H\leq G$ is a normal subgroup of odd index $[G:H]$, then $\lambda_{H}^{G}=1$.
 \item If there exists a normal subgroup $N$ of $G$ such that $N\leq H\leq G$ and $[G:N]$ odd, then $\lambda_{H}^{G}=1$.
\end{enumerate}

\end{lem}
\begin{proof}
 \begin{enumerate}
 
\item To prove (1) we use the equation (\ref{eqn 3.4})
\begin{equation}
 \lambda_{H}^{G}=W(\mathrm{Ind}_{H}^{G}1_H)=
 c(\mathrm{Ind}_{H}^{G}1_H)\cdot W(\mathrm{det}\circ\mathrm{Ind}_{H}^{G}1_H).\label{eqn 4.7}
\end{equation}
Since $\rho=\mathrm{Ind}_{H}^{G}1_H$ is orthogonal we may compute $c(\rho)$ by using the second  Stiefel-Whitney 
class $s_2(\rho)$.
From Proposition 2(v) on p. 119 of \cite{JT1}
we know that $c(\rho)=W(\rho)/W(\det(\rho))$ is a sign. If $cl(s_2(\rho))$
is the image of $s_2(\rho)$ under inflation map (which is injective), then according to 
Deligne's Theorem \ref{Theorem 4.1}, we have:
\begin{center}
 $c(\rho)=cl(s_2(\rho))$
\end{center}
if $\rho$ is orthogonal.  
Moreover, we have 
\begin{center}
 $s_2(\mathrm{Ind}_{H}^{G}1_H)\in H^2(G/H,\mathbb{Z}/2\mathbb{Z})=\{1\}$,
\end{center}
and $W(\Delta_{H}^{G})=1$ by Lemma \ref{Lemma 3.3}.
This implies that in equation (\ref{eqn 4.7}) both factors are $=1$, hence $\lambda_{H}^{G}=1$.
\item From $N\leq H\leq G$, we obtain 
\begin{equation}
 \lambda_{N}^{G}=\lambda_{N}^{H}\cdot(\lambda_{H}^{G})^{[H:N]}.
\end{equation}
From (1) we obtain $\lambda_{N}^{G}=\lambda_{N}^{H}=1$ because $N$ is normal and the index $[G:N]$ is odd, hence 
$(\lambda_{H}^{G})^{[H:N]}=1$. Finally this implies $\lambda_{H}^{G}=1$ because $\lambda_{H}^{G}$ is a fourth root of unity
and $[H:N]$ is odd.
 \end{enumerate}

\end{proof}
\textbf{Note:} In other words, we can state this above Lemma \ref{Lemma 4.2} as follows:\\ 
Let $\Delta\subseteq G$ be a subgroup and $H\subseteq\Delta$.
Let $H'=\cap_{x\in\Delta}xHx^{-1}\subset\Delta$ be 
the largest subgroup of $H$ which is normal in $\Delta\subseteq G$. Then $\lambda_{H}^{\Delta}(W)=1$ if the index $[\Delta:H']$ is odd, in 
particular if $H$ itself is a normal subgroup of $\Delta$ of odd index.

Now we are in a position to state the main theorem for odd degree Galois extension of a non-Archimedean local field.

\begin{thm}\label{General Theorem for odd case} 
Let $E/F$ be an odd degree Galois extension of a non-Archimedean local field $F$. If 
$L\supset K\supset F $ is any finite extension inside $E$, then $\lambda_{L/K}=1$. 
\end{thm}

\begin{proof}
 By the given condition the degree of extension $[E:F]$ of $E$ over $F$ is odd. 
 Let $L$ be any 
 arbitrary intermediate field of $E/F$ which contains $K/F$. Therefore, here we have the 
 tower of fields $E\supset L\supset K\supset F$. Here the degree of extensions are all odd since $[E:F]$ is odd.
 By assumption $E/F$ is Galois, then also the extension $E/L$ and $E/K$
 are Galois and $H=\mathrm{Gal}(E/L)$ is a subgroup of $G=\mathrm{Gal}(E/K)$. 
 
 By the definition 
 we have $\lambda_{L/K}=\lambda_{H}^{G}$.
 If $H$ is a normal subgroup of $G$ then $\lambda_{H}^{G}=1$ because $|G/H|$ is odd. But $H$ need {\bf not} be a 
 normal subgroup
 of $G$ therefore, $L/K$ need not be a Galois extension.  
  Let $N$ be the \textbf{largest} normal subgroup of $G$ contained in $H$ and $N$ can be written as:
 \begin{equation*}
  N=\cap_{g\in G}gHg^{-1}.
 \end{equation*}
 Therefore, the fixed field $E^{N}$ is the \textbf{smallest normal} extension of $K$ containing $L$. Now we have 
 \begin{equation}
  \lambda_{N}^{G}=\lambda_{N}^{H}\cdot(\lambda_{H}^{G})^{[H:N]}.
 \end{equation}
 
Since $|G|$ is odd, $[G:N]$ is odd and hence $\lambda_{L/K}=1$ by Lemma \ref{Lemma 4.2}(2).
Then we may say $\lambda_{L/K}=1$ in all possible cases if $[E^{N}:K]$ is odd. When the big extension $E/F$ is odd then all intermediate
extensions will be odd. Therefore, the theorem is proved for all possible cases.

\end{proof}

\begin{rem}\label{Remark 4.4} 
{\bf (1).} If the Galois extension $E/F$ is infinite then we say it is \textbf{odd} if 
 $[K:F]$ is odd for all sub-extensions of finite degree. This means the pro-finite group $\mathrm{Gal}(E/F)$ can 
 be realized as the projective limit of finite groups which are all of odd order.
 If $E/F$ is a Galois extension of odd order in this more general sense, then again we will have 
 $\lambda_{L/K}=1$ in all cases where $\lambda$-function is defined.\\
{\bf (2).}
But this above Theorem \ref{General Theorem for odd case} is not true if $K/F$ is not {\bf Galois}. 
Guy Henniart gives {\bf``An amusing formula"} 
 (cf. \cite{GH}, p. 124, Proposition 2) for 
 $\lambda_{K/F}$, when $K/F$ is arbitrary odd degree extension, and this formula is:
 \begin{equation}
  \lambda_{K/F}=W(\Delta_{K/F})^{n}\cdot\left(\frac{2}{q_F}\right)^{a(\Delta_{K/F})},
 \end{equation}
where $K/F$ is an extension in $\bar{F}$ with finite odd degree $n$, and $\left(\frac{2}{q_F}\right)$ is the Legendre symbol if 
$p$ is odd and is $1$ if $p=2$. Here $a$ denotes the exponent of the Artin-conductor. 

\end{rem}

\section{\textbf{Computation of $\lambda_{1}^{G}$ where $G$ is a finite local Galois group}}

From equation (\ref{eqn 3.4}),
we observe that to compute $\lambda_{H}^{G}$ we need to compute the Deligne's constant $c_{H}^{G}$
and $W(\Delta_{H}^{G})$.

In this section, we need the following Gallagher's result. 
\begin{thm}[Gallagher, \cite{GK}, Theorem $30.1.8$]\label{Theorem 1.3}
 Assume that $H$ is a normal subgroup of $G$, hence $\Delta_{H}^{G}=\Delta_{1}^{G/H}$, then 
 \begin{enumerate}
  \item $\Delta_{H}^{G}=1_G$, where $1_G$ is the trivial representation of $G$, unless the Sylow $2$-subgroups of $G/H$ are cyclic and 
  nontrivial.
  \item If the Sylow $2$-subgroups of $G/H$ are cyclic and nontrivial, then $\Delta_{H}^{G}$ is the only linear character of $G$
  of order $2$.
 \end{enumerate}
\end{thm}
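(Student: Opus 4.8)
The plan is to reduce the statement to a sign computation for the left-regular permutation action of the quotient $Q:=G/H$ on itself. Since $H$ is normal in $G$, the representation $\mathrm{Ind}_{H}^{G}1_H$ is the permutation representation of $G$ on $\bbC[G/H]$, which is the inflation to $G$ of the regular representation $r_Q$ of $Q$; hence $\Delta_{H}^{G}=\det(\mathrm{Ind}_{H}^{G}1_H)$ is the inflation of $\det(r_Q)=:\Delta_{1}^{Q}$, and it is enough to analyze the linear character $\Delta_{1}^{Q}$ of $Q$. As $r_Q$ is the permutation representation attached to the action $x\mapsto gx$ of $Q$ on itself, $\Delta_{1}^{Q}(g)$ is the sign of that permutation; for a fixed $g$ it is a disjoint product of $|Q|/\mathrm{ord}(g)$ cycles, each of length $\mathrm{ord}(g)$, so
\begin{equation*}
 \Delta_{1}^{Q}(g)=\bigl((-1)^{\mathrm{ord}(g)-1}\bigr)^{|Q|/\mathrm{ord}(g)},
\end{equation*}
which equals $-1$ exactly when $\mathrm{ord}(g)$ is even and $|Q|/\mathrm{ord}(g)$ is odd, i.e. exactly when the $2$-part of $\mathrm{ord}(g)$ equals the full $2$-part $|Q|_2$ of $|Q|$ (and $|Q|_2>1$).

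For part (1): if $|Q|$ is odd, every $\mathrm{ord}(g)$ is odd and $\Delta_{1}^{Q}\equiv 1$. Suppose $|Q|$ is even but the Sylow $2$-subgroup $S$ of $Q$ is non-cyclic. I claim no $g\in Q$ satisfies $|S|\mid\mathrm{ord}(g)$: otherwise, writing $\mathrm{ord}(g)=|S|\cdot m$, the element $g^{m}$ has order exactly $|S|=|Q|_2$, so $\langle g^{m}\rangle$ is a cyclic Sylow $2$-subgroup of $Q$, and then $S$, being conjugate to it, is cyclic --- a contradiction. By the formula above, $\Delta_{1}^{Q}(g)=-1$ forces $\mathrm{ord}(g)_2=|Q|_2=|S|$, hence $|S|\mid\mathrm{ord}(g)$; since no such $g$ exists, $\Delta_{1}^{Q}\equiv 1$. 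Thus $\Delta_{1}^{Q}$ is nontrivial only if $S$ is cyclic and nontrivial. Conversely, if $S$ is cyclic and nontrivial, a generator $g_0$ of $S$ has $\mathrm{ord}(g_0)=|S|=|Q|_2$ with $|Q|/\mathrm{ord}(g_0)$ odd, whence $\Delta_{1}^{Q}(g_0)=-1$: the character is nontrivial, so of order exactly $2$, and $\Delta_{1}^{Q}|_{S}$ is the unique order-$2$ character of $S$ (kernel the unique index-$2$ subgroup $S^{2}$).

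For part (2): assume $S$ is cyclic and nontrivial and let $\chi$ be a linear character of $Q$ of order $2$; I must show $\chi=\Delta_{1}^{Q}$. First $\chi|_{S}$ is nontrivial: if $S\subseteq\ker\chi$ then, since $[Q:\ker\chi]=2$, we would get $|\ker\chi|_2=|Q|_2/2<|S|$, contradicting $|S|\mid|\ker\chi|$. As $S$ is cyclic, $\chi|_{S}$ is therefore the unique order-$2$ character of $S$, which is also $\Delta_{1}^{Q}|_{S}$; hence $\psi:=\chi\cdot\Delta_{1}^{Q}$ satisfies $\psi^{2}=1$ and $\psi|_{S}=1$. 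Running the same argument, $\psi\ne1$ would give $S\subseteq\ker\psi$ with $[Q:\ker\psi]=2$, impossible; so $\psi=1$ and $\chi=\Delta_{1}^{Q}$. Finally, inflating along $G\to G/H$ transports these conclusions about $\Delta_{1}^{Q}$ to the stated assertions about $\Delta_{H}^{G}$ as a character of $G$, with the Sylow $2$-subgroups of $G/H$ playing the role of $S$.

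The main obstacle --- and the crux of the whole argument --- is the observation that a cyclic Sylow $2$-subgroup is forced as soon as some element of $Q$ has order divisible by the full $2$-part $|Q|_2$, together with the two ``no room in the kernel'' counting steps in part (2); once the sign formula $\Delta_{1}^{Q}(g)=(-1)^{(\mathrm{ord}(g)-1)\,|Q|/\mathrm{ord}(g)}$ is established, the rest is elementary. One could also phrase part (2) as the equality $|Q^{\mathrm{ab}}/2Q^{\mathrm{ab}}|=2$ and deduce it via the focal subgroup theorem, but the direct restriction argument avoids invoking transfer.
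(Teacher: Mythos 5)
Your argument is correct, but note that the paper offers no proof of this statement at all: it is quoted as Gallagher's theorem via Karpilovsky (\cite{GK}, Theorem 30.1.8), so there is nothing internal to compare against. What you have done is supply a self-contained elementary proof: identify $\Delta_{H}^{G}$ with $\Delta_{1}^{Q}$ for $Q=G/H$, compute $\Delta_{1}^{Q}(g)$ as the sign of left translation (cycles of length $\mathrm{ord}(g)$, giving $(-1)^{(\mathrm{ord}(g)-1)|Q|/\mathrm{ord}(g)}$), observe that the value $-1$ forces an element whose order has full $2$-part and hence a cyclic Sylow $2$-subgroup, and settle uniqueness in (2) by the two restriction-to-$S$/kernel-counting steps; all of these steps check out. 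This is genuinely different from the route the paper itself uses for the closely related facts it does prove: in Remark 3.4 and Lemma 3.41 the author reduces $\Delta_{1}^{G}$ to the abelianization via $\Delta_{1}^{G}=(\Delta_{1}^{G/G'})^{|G'|}$ (using triviality of the transfer $T_{G/G'}$) and then invokes Miller's theorem that for abelian groups the product of all characters is the unique quadratic character when $\mathrm{rk}_2=1$ and trivial otherwise; your permutation-sign computation avoids both the transfer and Miller's theorem and handles the non-abelian case directly, at the cost of the small Sylow/conjugacy argument. One caveat worth flagging: what you actually prove in (2) is that $\Delta_{1}^{G/H}$ is the unique order-$2$ character of $G/H$, i.e.\ the unique order-$2$ character of $G$ trivial on $H$; the paper's literal phrasing ``the only linear character of $G$ of order $2$'' is too strong in general (take $G$ Klein's four group and $|H|=2$), though it is harmless in the paper's application, where $H=\{1\}$. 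That is a defect of the quoted statement, not of your proof.
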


\begin{dfn}[\textbf{$2$-rank of a finite abelian group}]
Let $G$ be a finite abelian group. Then from the 
elementary divisor theorem for finite abelian groups,  we can write
\begin{equation}
 G\cong\bbZ/{m_1}\bbZ\times\bbZ/{m_2}\bbZ\times\cdots\times\bbZ/{m_s}\bbZ,
\end{equation}
where $m_1|m_2|\cdots|m_s$ and $\prod_{i=1}^{s}m_i=|G|$. We define 
\begin{center}
the $2$-rank of $G:=$the number of $m_i$-s which are even
\end{center}
 and we set 
\begin{center}
 $\mathrm{rk}_{2}(G)=$ $2$-rank of $G$.
\end{center}
When the order of an abelian group $G$ is odd, from the structure of $G$ we have $\mathrm{rk}_2(G)=0$, i.e.,
there are no 
even $m_i$-s for $G$.

\end{dfn}

\begin{rem}[\textbf{Remark on Theorem \ref{Theorem 1.3}}]\label{Remark 3.4}
If $G$ is a finite group with subgroups $H'\subset H\subset G$, then for
$\Delta_{H}^{G}=\det(\text{Ind}_{H}^{G}1_H)$ we know from  
Gallagher's Theorem \ref{Lemma 2.5}
\begin{align}
 \Delta_{H'}^{G}
 &=\det(\text{Ind}_{H'}^{G}1_{H'})=\det(\text{Ind}_{H}^{G}(\text{Ind}_{H'}^{H}1_{H'}))\nonumber\\
 &=(\Delta_{H}^{G})^{[H:H']}\cdot\det(\text{Ind}_{H'}^{H}1_{H'})\circ T_{G/H}\nonumber\\
 &=(\Delta_{H}^{G})^{[H:H']}\cdot(\Delta_{H'}^{H}\circ T_{G/H}).\label{eqn 3.28}
\end{align}
Now we use equation (\ref{eqn 3.28}) for $H'=\{1\}$ and $H=[G,G]=G'$. Then we have 
\begin{equation}\label{eqn 3.29}
 \Delta_{1}^{G}=(\Delta_{G'}^{G})^{|G'|}\cdot\Delta_{1}^{G'}\circ T_{G/G'}=(\Delta_{G'}^{G})^{|G'|},
\end{equation}
because by Theorem 10.25 on p. 320 of \cite{IM},  $T_{G/G'}$ is the trivial map.



We also know that $G'$ is a normal subgroup of $G$, then we can write $\text{Ind}_{G'}^{G}1_{G'}\cong\text{Ind}_{1}^{G/G'}1$, hence 
$\Delta_{G'}^{G}=\Delta_{1}^{G/G'}$. So we have 
\begin{equation}\label{eqn 3.30}
 \Delta_{1}^{G}=(\Delta_{G'}^{G})^{|G'|}=(\Delta_{1}^{G/G'})^{|G'|}.
\end{equation}
From the above equation (\ref{eqn 3.30}) we observe that $\Delta_{1}^{G}$ always reduces to the \textbf{abelian case} because 
$G/G'$ is abelian. Moreover, we know that:\\
\emph{If $G$ is abelian then $\text{Ind}_{1}^{G}1=r_G$ is the sum of all characters of $G$, hence from Miller's result 
(cf. \cite{PC}, Theorem 6) for the abelian
group $\widehat{G}$ we obtain:
\begin{align}
 \Delta_{1}^{G}
 &=\det(\text{Ind}_{1}^{G}1)=\det(\sum_{\chi\in\widehat{G}}\chi)\nonumber\\
 &=\prod_{\chi\in\widehat{G}}\det(\chi)=\prod_{\chi\in\widehat{G}}\chi\nonumber\\
 &=\begin{cases}
    \alpha & \text{if $\text{rk}_2(G)=1$}\\
    1 & \text{if $\text{rk}_2(G)\ne 1$},
   \end{cases}\label{eqn 3.31}
\end{align}
where $\alpha$ is the uniquely determined quadratic character of $G$.}
\end{rem}


Moreover, since $G/G'$ is abelian, by using equation (\ref{eqn 3.31}) for $G/G'$, from equation (\ref{eqn 3.30}) we obtain:

\begin{lem}\label{Lemma 3.41}
 Let $G$ be a finite group and let $S$ be a Sylow 2-subgroup of $G$. Then the following are equivalent:
 \begin{enumerate}
  \item $S<G$ is nontrivial cyclic;
  \item $\Delta_{1}^{G}\ne 1$, is the unique quadratic character of $G$;
  \item $\text{rk}_2(G/G')=1$ and $|G'|$ is odd.
 \end{enumerate}

\end{lem}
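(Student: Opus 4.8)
The plan is to prove the two equivalences $(1)\Leftrightarrow(2)$ and $(2)\Leftrightarrow(3)$; the third then follows by transitivity. The first of these is essentially a restatement of Gallagher's Theorem \ref{Theorem 1.3}, which I would apply with the (trivially normal) subgroup $H=\{1\}$, so that $G/H=G$ and $\Delta_{H}^{G}=\Delta_{1}^{G}$. Part (1) of that theorem then says $\Delta_{1}^{G}=1_G$ unless the Sylow $2$-subgroups of $G$ are cyclic and nontrivial, and part (2) says that in the latter case $\Delta_{1}^{G}$ is the only linear character of $G$ of order $2$. Reading these both ways gives exactly: the Sylow $2$-subgroup $S$ is nontrivial cyclic if and only if $\Delta_{1}^{G}\ne 1$, and when this holds $\Delta_{1}^{G}$ is the unique quadratic character of $G$. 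That is precisely $(1)\Leftrightarrow(2)$.

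For $(2)\Leftrightarrow(3)$ I would work only with equations (\ref{eqn 3.30}) and (\ref{eqn 3.31}). Equation (\ref{eqn 3.30}) gives $\Delta_{1}^{G}=(\Delta_{1}^{G/G'})^{|G'|}$, while equation (\ref{eqn 3.31}), applied to the abelian group $G/G'$, tells us that $\Delta_{1}^{G/G'}$ is the unique quadratic character of $G/G'$ when $\rm{rk}_2(G/G')=1$ and is trivial otherwise; in particular it always has order dividing $2$. Raising such a character to the $|G'|$-th power leaves it unchanged when $|G'|$ is odd and makes it trivial when $|G'|$ is even. Hence $\Delta_{1}^{G}\ne 1$ precisely when $\rm{rk}_2(G/G')=1$ \emph{and} $|G'|$ is odd, and in that case $\Delta_{1}^{G}$ equals the unique quadratic character of $G/G'$. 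Since every one-dimensional character of $G$ factors through the abelianisation $G^{ab}=G/G'$, the quadratic characters of $G$ coincide with those of $G/G'$, so this character is the unique quadratic character of $G$ --- which is exactly condition $(2)$. Thus $(2)\Leftrightarrow(3)$, and combining with the previous paragraph all three statements are equivalent.

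I do not expect a serious obstacle: the substantive group-theoretic input --- that a cyclic Sylow $2$-subgroup is equivalent to $\Delta_{1}^{G}$ being a (necessarily unique) quadratic character --- is imported through Gallagher's Theorem \ref{Theorem 1.3}, and the rest is elementary parity bookkeeping with the order-$2$ characters $\Delta_{1}^{(\,\cdot\,)}$ together with the identification $\widehat{G}=\widehat{G/G'}$. The one point deserving a word of care is the passage from ``unique quadratic character of $G/G'$'' to ``unique quadratic character of $G$'', which is just the fact that $1$-dimensional characters are inflated from the abelianisation. If one prefers a proof of $(1)\Leftrightarrow(3)$ not relying on Gallagher, one can argue directly: $|G'|$ odd forces $S\cap G'=\{1\}$, so $S$ is isomorphic to the Sylow $2$-subgroup of $G/G'$, which is nontrivial cyclic iff $\rm{rk}_2(G/G')=1$; conversely $S$ cyclic makes $\rm{Aut}(S)$ a $2$-group, hence $N_G(S)=C_G(S)$, and Burnside's normal $2$-complement theorem yields a normal $2$-complement $N$ with $G/N\cong S$, so $G'\le N$ has odd order.
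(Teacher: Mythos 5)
Your proof is correct, and it shares the paper's backbone --- both arguments get $(1)\Leftrightarrow(2)$ by taking $H=\{1\}$ in Gallagher's Theorem \ref{Theorem 1.3} and both use the reduction $\Delta_{1}^{G}=(\Delta_{1}^{G/G'})^{|G'|}$ of equation (\ref{eqn 3.30}) --- but you close the cycle of implications differently. The paper proves $(2)\Rightarrow(3)$ from (\ref{eqn 3.30}) and then handles $(3)\Rightarrow(1)$ by a separate group-theoretic step: since $|G'|$ is odd, the quotient map restricts to an isomorphism of $S$ onto a Sylow $2$-subgroup $S'$ of $G/G'$, and $\mathrm{rk}_2(G/G')=1$ forces $S'$, hence $S$, to be nontrivial cyclic. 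You instead prove the full equivalence $(2)\Leftrightarrow(3)$ inside character theory, combining (\ref{eqn 3.30}) with Miller's product formula (\ref{eqn 3.31}) for the abelian group $G/G'$ and the observation that quadratic characters of $G$ are exactly those inflated from $G/G'$; the parity of $|G'|$ then does the rest, and $(3)\Rightarrow(1)$ comes for free by running Gallagher backwards. Both routes are sound; yours keeps the whole argument at the level of determinants of induced representations and makes the role of the abelianisation explicit (and, as you note, the inflation step is the one point needing care), while the paper's Sylow argument is more elementary at that leg and independent of (\ref{eqn 3.31}). Your optional Burnside normal-$2$-complement argument for $(1)\Leftrightarrow(3)$ is a genuinely different, self-contained alternative not present in the paper, though it is not needed once Gallagher is invoked.
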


\begin{proof}
 Take $H=\{1\}$ in Gallagher's Theorem \ref{Theorem 1.3} and we can see that $(1)$ and $(2)$ are equivalent. From  
 equation (\ref{eqn 3.30}) we can see $(2)$ implies the condition $(3)$.
 
Now we are left to show that $(3)$ implies $(1)$. Let $S'$ be a Sylow 2-subgroup of $G/G'$. Since $\text{rk}_2(G/G')=1$, hence 
$\text{rk}_2(S')=1$, and therefore, $S'$ is cyclic. Moreover, 
 $|G'|$ is odd, hence $|S|=|S'|$. Let $f:G\to G/G'$ be the canonical group homomorphism. 
 Since $|G'|$ is odd, 
 and $\text{rk}_2(G/G')=1$, $f|_{S}$ is an isomorphism from $S$ to $S'$. Hence $S$ is a nontrivial cyclic Sylow $2$-subgroup of $G$.

This completes the proof. 

\end{proof}

\begin{thm}[\textbf{Schur-Zassenhaus}]\label{Theorem 3.61}
  If $H\subset G$ is a normal subgroup such that 
 $|H|$ and $[G:H]$ are relatively prime, then $H$ will have a complement $S$ that is a subgroup of $G$ such that 
 \begin{center}
  $G=H\rtimes S$
 \end{center}
is a semidirect product.
\end{thm}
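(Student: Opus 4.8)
The plan is to prove this classical theorem — of which only the existence of a complement is asserted here — in two stages: first the case $H$ abelian, then a reduction of the general case to it by induction on $|G|$.

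\emph{Stage 1: $H$ abelian.} Set $Q=G/H$, $m=|Q|=[G:H]$, $n=|H|$, so $\gcd(m,n)=1$. Since $H$ is abelian the conjugation action of $G$ on $H$ factors through $Q$, making $H$ a $\bbZ[Q]$-module, and the extension $1\to H\to G\to Q\to 1$ is classified by a cohomology class $\xi\in H^{2}(Q,H)$ that vanishes exactly when the extension splits. Now $H^{2}(Q,H)$ is annihilated by $|Q|=m$, while multiplication by $m$ is an automorphism of the finite abelian group $H$ (as $\gcd(m,n)=1$), hence also of $H^{2}(Q,H)$; therefore $H^{2}(Q,H)=0$, so $\xi=0$ and $G$ contains a subgroup $S$ with $G=H\rtimes S$, $|S|=m$. (One can avoid cohomology: pick a set-theoretic transversal $t\colon Q\to G$ with $t(1)=1$, let $c(x,y)=t(x)t(y)t(xy)^{-1}\in H$, and replace $t$ by a suitably averaged transversal $x\mapsto t(x)\,d(x)$ with $d(x)$ built from $\prod_{y\in Q}c(x,y)$ and the inverse $m'$ of $m$ modulo $n$; the only point needing care is that this modified map is a homomorphism $Q\to G$.)

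\emph{Stage 2: the general case, by induction on $|G|$.} If $H=\{1\}$ there is nothing to do. Otherwise choose a prime $p\mid|H|$ and a Sylow $p$-subgroup $P$ of $H$; since $H\triangleleft G$, the Frattini argument gives $G=H\,N_{G}(P)$. If $N_{G}(P)\ne G$, put $N=N_{G}(P)$ and $H_{1}=H\cap N=N_{H}(P)$. Then $H_{1}\triangleleft N$ and, because $G=HN$, the second isomorphism theorem gives $[N:H_{1}]=[G:H]=m$, which is coprime to $|H_{1}|$; as $|N|<|G|$, by induction $N$ has a complement $S$ to $H_{1}$, with $|S|=m$. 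From $S\cap H=S\cap(H\cap N)=S\cap H_{1}=\{1\}$ and $|S|\cdot|H|=|G|$ we get $G=HS$, so $S$ is the required complement.

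It remains to treat $N_{G}(P)=G$, i.e. $P\triangleleft G$. Then $Z=Z(P)\ne\{1\}$ is characteristic in $P$, hence normal in $G$, and $Z$ is abelian. If $Z=H$ then $H$ itself is abelian and Stage 1 finishes the job. If $Z\subsetneq H$, pass to $\overline{G}=G/Z$: then $H/Z\triangleleft\overline{G}$ with index $m$ and $|\overline{G}|<|G|$, so by induction there is a complement $\overline{S}\le\overline{G}$ to $H/Z$; let $S^{*}\le G$ be its preimage, so that $S^{*}\cap H=Z$, $|S^{*}|=m|Z|$, $Z\triangleleft S^{*}$, $Z$ abelian, and $\gcd(|Z|,m)=1$. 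By Stage 1, $S^{*}$ has a complement $S$ to $Z$; then $|S|=m$ and $S\cap H=S\cap(S^{*}\cap H)=S\cap Z=\{1\}$, so $G=HS$ and $S$ is a complement to $H$ in $G$.

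The crux is Stage 1: the inductive bookkeeping of Stage 2 (Frattini argument, orders, second isomorphism theorem) is routine, whereas the splitting of an abelian extension of coprime index genuinely rests on $H^{2}(Q,H)=0$ — or, cohomology aside, on the averaging trick, whose single delicate point is verifying multiplicativity of the corrected transversal.
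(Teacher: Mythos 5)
Your proof is correct. Both stages work: in Stage 1 the extension class argument is sound, since multiplication by $m=[G:H]$ is simultaneously zero on $H^{2}(Q,H)$ (annihilated by $|Q|$) and an automorphism of it (being a $Q$-module automorphism of $H$ when $\gcd(m,n)=1$), forcing $H^{2}(Q,H)=0$; and in Stage 2 the Frattini argument, the second isomorphism theorem giving $[N_G(P):N_H(P)]=m$, and the reduction through $Z(P)$ when $P$ is normal are all carried out correctly, with the order count $|S|\cdot|H|=|G|$ and $S\cap H=\{1\}$ legitimately yielding $G=H\rtimes S$. Note, however, that the paper does not prove this statement at all: it quotes Schur--Zassenhaus as a classical result (only the existence of a complement is needed there, to split off a Sylow $2$-subgroup from the odd-order Hall subgroup in Proposition 4.8), so there is no in-paper argument to compare with. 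Your two-stage proof is the standard textbook one (cohomological vanishing for abelian kernel, then induction via Frattini and the center of a normal Sylow subgroup), and it fully establishes the statement as formulated, which asserts existence of the complement but not its uniqueness up to conjugacy.
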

Let $G$ be a local Galois group. Since $G$ is solvable, $G$ has Hall-subgroups 
$H\subset G$ of all types 
such that $[G:H]$ and $|H|$ are relatively prime. In particular, $G$ will have an odd Hall subgroup $H\subset G$ such that $|H|$ 
is odd and $[G:H]$ is power of $2$. 
From this we conclude the following proposition.
\begin{prop}\label{Proposition 3.5}
Let $G$ be a finite local Galois group of a non-Archimedean local field.
 Let $H\subset G$ be an odd order Hall subgroup of $G$ (which is unique up to conjugation). Then we have 
 \begin{equation}\label{eqn 3.11}
  \lambda_{1}^{G}=(\lambda_{H}^{G})^{|H|}.
 \end{equation}
Hence $\lambda_{1}^{G}=\lambda_{H}^{G}$ if $|H|\equiv 1\pmod{4}$ and $\lambda_{1}^{G}=(\lambda_{H}^{G})^{-1}$ if 
$|H|\equiv 3\pmod{4}$.\\
If the local base field $F/\bbQ_p$ has residue characteristic $p\ne 2$, then the odd order Hall subgroup $H\subset G$ is a normal subgroup
and therefore, $\lambda_{H}^{G}=\lambda_{1}^{G/H}$, where $G/H\cong S$ is isomorphic to a Sylow $2$-subgroup of $G$. For 
$G=\text{Gal}(E/F)$
this means that we have a unique normal extension $K/F$ in $E$ such that $\text{Gal}(K/F)$ is isomorphic to a Sylow $2$-subgroup
of $G$, and we will have 
\begin{center}
 $\lambda_{E/F}=\lambda_{K/F}^{[E:K]}$.
\end{center}
\end{prop}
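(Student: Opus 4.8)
The plan is to derive everything from the tower formula for $\lambda$-functions (equation (\ref{eqn 2.11}), or rather its group-theoretic form $\lambda_{N}^{G}=\lambda_{N}^{H}\cdot(\lambda_{H}^{G})^{[H:N]}$ recorded in 2.2(2)) together with Theorem \ref{General Theorem for odd case} on odd degree Galois extensions. First I would apply the tower identity to the chain $\{1\}\subset H\subset G$, which corresponds to a tower of local fields $E\supset E^{H}\supset F$ with $\mathrm{Gal}(E/E^{H})=H$. This gives
\[
\lambda_{1}^{G}=\lambda_{1}^{H}\cdot(\lambda_{H}^{G})^{|H|}.
\]
Now $H=\mathrm{Gal}(E/E^{H})$ is a local Galois group of odd order, so by Theorem \ref{General Theorem for odd case} (equivalently Remark \ref{Remark 4.4}(2)) we have $\lambda_{1}^{H}=\lambda_{E/E^{H}}=1$. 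Hence $\lambda_{1}^{G}=(\lambda_{H}^{G})^{|H|}$, which is equation (\ref{eqn 3.11}).

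The refinement modulo $4$ is then immediate: since $\lambda_{H}^{G}$ is always a fourth root of unity (cf. \cite{BH}, Corollary 30.4), the quantity $(\lambda_{H}^{G})^{|H|}$ depends only on $|H|\bmod 4$. If $|H|\equiv 1\pmod 4$ then $(\lambda_{H}^{G})^{|H|}=\lambda_{H}^{G}$, and if $|H|\equiv 3\pmod 4$ then $(\lambda_{H}^{G})^{|H|}=(\lambda_{H}^{G})^{3}=(\lambda_{H}^{G})^{-1}$.

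For the last part I would invoke the structure of local Galois groups when $p\ne 2$. The wild inertia $P\lhd G$ is a $p$-group, hence of odd order, and $G/P$ is metacyclic, being an extension of a cyclic group $C_{f}$ (residue part) by a cyclic group $C_{e'}$ (tame inertia, of order prime to $p$). Since the automorphism group of a cyclic $2$-group is itself a $2$-group, any odd-order subgroup acts trivially on the Sylow $2$-subgroup of $C_{e'}$; combining this with Schur--Zassenhaus (Theorem \ref{Theorem 3.61}), applied to suitable preimages, one produces a normal subgroup of $G/P$ whose order is the odd part of $|G/P|$ and whose index is a power of $2$. Pulling it back to $G$, and using that the odd Hall subgroup $H$ contains every odd-order normal subgroup (in particular $P\subseteq H$), one concludes $H\lhd G$. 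Then $\mathrm{Ind}_{H}^{G}1_{H}\cong\mathrm{Ind}_{1}^{G/H}1$ is the inflation of the regular representation of $G/H$, so $\lambda_{H}^{G}=\lambda_{1}^{G/H}$; moreover, for a Sylow $2$-subgroup $S$ one has $S\cap H=\{1\}$ (coprime orders) and $|S|=[G:H]=|G/H|$, which force $S\cong G/H$. Writing $K=E^{H}$, so that $\mathrm{Gal}(K/F)=G/H\cong S$ and $\mathrm{Gal}(E/K)=H$ has odd order, Theorem \ref{General Theorem for odd case} gives $\lambda_{E/K}=1$, and the tower formula (\ref{eqn 2.11}) yields $\lambda_{E/F}=\lambda_{K/F}^{[E:K]}$.

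The two applications of the tower formula and the vanishing $\lambda_{1}^{H}=1$ are routine given the earlier results, and the congruence refinement is elementary. The one genuinely structural point, and the step I expect to require the most care to state cleanly, is the normality of the odd Hall subgroup when $p\ne 2$: this is where the special arithmetic of local Galois groups (wild inertia a normal $p$-group, metacyclic tame quotient, and the $2$-group nature of $\mathrm{Aut}$ of a cyclic $2$-group) is indispensable.
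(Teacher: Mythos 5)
Your treatment of (\ref{eqn 3.11}) and of the congruence refinement is exactly the paper's argument: apply the tower formula to the chain $\{1\}\subset H\subset G$ (equivalently to $E/E^{H}/F$), kill $\lambda_{1}^{H}$ by Theorem \ref{General Theorem for odd case} since $E/E^{H}$ is automatically Galois of odd degree, and then use that $\lambda_{H}^{G}$ is a fourth root of unity, so only $|H|\bmod 4$ matters. The single point of divergence is the normality of the odd Hall subgroup when $p\ne 2$. The paper's route is: the wild inertia group $G_{1}$ is a normal $p$-group, hence lies in $H$; the tame quotient $G/G_{1}$ is supersolvable; odd-order Hall subgroups of supersolvable groups are normal; pull back, and then Schur--Zassenhaus (Theorem \ref{Theorem 3.61}) identifies $G/H$ with a Sylow $2$-subgroup $S$. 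You reach the same conclusion via the metacyclic presentation of the tame quotient and the observation that the automorphism group of a cyclic $2$-group is a $2$-group; this is a legitimate, more hands-on substitute (in effect a direct proof of the supersolvability statement in the metacyclic case), but it is also the one step you leave schematic: ``combining this with Schur--Zassenhaus, applied to suitable preimages'' is not yet a construction of the asserted normal subgroup of odd-Hall order in $G/P$. The quickest repair is the paper's: every subgroup of the cyclic normal tame inertia is normal in the quotient and everything above it has abelian (cyclic) quotient, so the tame quotient is supersolvable and therefore has a normal $2$-complement; alternatively carry out your centralizer/preimage argument explicitly. Your final identification $S\cong G/H$ by injectivity of $S\to G/H$ plus an order count is fine and even bypasses Schur--Zassenhaus at that stage, where the paper invokes it.
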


\begin{proof}
We know that the local Galois group $G$ is solvable, then $G$ has an odd order Hall subgroup $H\subset G$.
 Then the formula (\ref{eqn 3.11}) follows because $\lambda_{1}^{H}=1$ (here $|H|$ is 
 odd and $H$ is a subgroup of the local Galois group $G$).
 
 Let now $p\ne 2$ and let $H$ be an odd order Hall subgroup of $G$. The 
 ramification subgroup $G_1\subset G$ is a normal subgroup of order a power of $p$, hence $G_1\subset H$, and $H/G_1\subset G/G_1$
 will be an odd order Hall subgroup of $G/G_1$. But the group $G/G_1$ is \textbf{supersolvable}. 
 We also know 
 that the odd order Hall subgroup of a supersolvable group is normal. Therefore, $H/G_1$ is normal in $G/G_1$, and this implies that 
 $H$ is normal in $G$. Now we can use Theorem \ref{Theorem 3.61} and we obtain $G/H\cong S$ where
 $S$ must be a Sylow $2$-subgroup. Therefore, when $p\ne 2$ we have 
 \begin{center}
  $\lambda_{1}^{G}=\lambda_{E/F}=(\lambda_{1}^{G/H})^{|H|}=\lambda_{K/F}^{[E:K]}$,
 \end{center}
where $G=\text{Gal}(E/F)$, $H=\text{Gal}(E/K)$ and $G/H=\text{Gal}(K/F)\cong S$.
 
 \end{proof}

Let $F/\bbQ_p$ be a local field with $p\ne 2$. Let $K/F$ be the extension such 
that $\text{Gal}(K/F)=V$ Klein's $4$-group.
In the following lemma we give an explicit formula for the $\lambda_{1}^{V}=\lambda_{K/F}$.

\begin{lem}\label{Lemma 4.6}
Let $F/\bbQ_p$ be a local field with $p\ne 2$. Let $K/F$ be the uniquely determined extension with $V=\text{Gal}(K/F)$, Klein's $4$-group. 
Then \\
 $\lambda_{1}^{V}=\lambda_{K/F}=-1$ if $-1\in F^\times$ is a square, i.e., $q_F\equiv 1\pmod{4}$, and\\
 $\lambda_{1}^{V}=\lambda_{K/F}=1$ if $-1\in F^\times$ is not a square , i.e., if $q_F\equiv 3\pmod{4}$,\\
 where $q_F$ is the cardinality of the residue field of $F$.
\end{lem}

\begin{proof}
If $p\ne 2$ then the square class group $F^\times/{F^\times}^2$ is Klein's 4-group, and $K/F$
is the unique abelian extension such that $N_{K/F}(K^\times)={F^\times}^2$, hence 
$$\text{Gal}(K/F)\cong F^\times/{F^\times}^2=V.$$
Since $V$ is abelian, we can write $\widehat{V}\cong V$. This implies that
there are exactly three nontrivial characters of $V$ and they are quadratic. 
By class field theory we can consider them as quadratic characters of $F^\times$.
Each of these quadratic characters determines a quadratic extension of $F$. 
Thus there are three quadratic subextensions $L_i/F$ in $K/F$, where $i=1,2,3$. We denote $L_1/F$ the 
unramified extension whereas $L_2/F$ and $L_3/F$ are tamely ramified. Then we can write 
\begin{equation}
 \lambda_{K/F}=\lambda_{K/L_i}\cdot\lambda_{L_i/F}^{2} 
\end{equation}
for all $i\in\{1,2,3\}$.
 The group $V$ has four characters $\chi_i$, $i=0,\cdots, 3$, where $\chi_0\equiv 1$ and 
 $\chi_i(i=1,2,3)$ are three characters of $V$ such that $\mathrm{Gal}(K/L_i)$ is the kernel of $\chi_i$, in other
words, $\chi_i$ is the quadratic character of $F^\times/N_{L_i/F}(L_{i}^{\times})$.

Let $r_V=\mathrm{Ind}_{\{1\}}^{V} 1$,
 then 
 \begin{center}
  $\Delta_{1}^{V}=\det(r_V)=\prod_{i=0}^{3}\chi_i\equiv 1$,
 \end{center}
because $\chi_3=\chi_1\cdot\chi_2$. Therefore, $W(\Delta_{1}^{V})=1$ and 
\begin{center}
 $\lambda_{K/F}=c(r_V)$
\end{center}
is Deligne's constant. More precisely, we have 
\begin{equation}\label{eqn 4.26}
 \lambda_{K/F}=W(\chi_1)\cdot W(\chi_2)\cdot W(\chi_1\chi_2).
\end{equation}
But here $\chi_1$ is unramified and therefore, $W(\chi_1)=\chi_1(c_1)$ (see equation (\ref{eqn 2.3.5})) 
and by using unramified character twisting formula, $W(\chi_1\chi_2)=\chi_1(c_2)\cdot W(\chi_2)$, where 
$c_2=\pi_F c_1$ because $a(\chi_2)=1+a(\chi_1)=1$. Therefore, the equation (\ref{eqn 4.26}) implies:
\begin{equation}
 \lambda_{K/F}=\chi_1(c_1)^{2}\cdot\chi_1(\pi_F)\cdot W(\chi_2)^{2}=-\chi_2(-1),
\end{equation}
since $\chi_1(\pi_F)=-1$.
Similarly, putting $\chi_2=\chi_{1}^{-1}\chi_3=\chi_1\chi_3$ and $\chi_1\chi_2=\chi_3$ in the equation (\ref{eqn 4.26}) we have 
\begin{equation}
 \lambda_{K/F}=-\chi_3(-1).
\end{equation}
Therefore, we have $\lambda_{K/F}=-\chi_i(-1)$ for $i=2,3$.

Moreover, we know that 
\begin{equation*}
 \chi_i(-1)=\begin{cases}
             1 & \text{if $-1\in F^\times$ is a square, i.e., $q_F\equiv 1\pmod{4}$}\\
             -1 & \text{if $-1\in F^\times$ is not a square, i.e., $q_F\equiv 3 \pmod{4}$}.\\
            \end{cases}
\end{equation*}
Thus finally we conclude that 
\begin{equation*}
 \lambda_{K/F}=-\chi_i(-1)=\begin{cases}
             -1 & \text{if $-1\in F^\times$ is a square, i.e., $q_F\equiv 1\pmod{4}$}\\
             1 & \text{if $-1\in F^\times$ is not a square, i.e., $q_F\equiv 3 \pmod{4}$}.\\
            \end{cases}
\end{equation*}

\end{proof}
For proving our next theorem we need 
the following theorem due to Bruno Kahn.

\begin{thm}[\cite{BK}, S\'{e}rie 1-313, Theorem 1]\label{Theorem 4.2}
 Let $G$ be a finite group, $r_G$ its regular representation. Let $S$ be any $2$-Sylow subgroup of $G$. Then $s_2(r_G)=0$
 in the following cases:
 \begin{enumerate}
  \item $S$ is a cyclic group of order $\geq 8$;
  \item $S$ is a generalized quaternion group;
  \item $S$ is not a metacyclic group.
 \end{enumerate}
\end{thm}
In the following theorem we give a general formula for $\lambda_{1}^{G}$, where $G$ is a finite local Galois group. 

\begin{thm}\label{Theorem 4.3}
 Let $G$ be a finite local Galois group of a non-Archimedean local field $F$. Let $S$ be a Sylow 2-subgroup of $G$.
 \begin{enumerate}
 \item If $S=\{1\}$, then we have $\lambda_{1}^{G}=1$. 
  \item If the Sylow 2-subgroup $S\subset G$ is nontrivial cyclic (\textbf{exceptional case}), then
  \begin{equation}
   \lambda_{1}^{G}=\begin{cases}
                    W(\alpha) & \text{if $|S|=2^n\ge 8$}\\
                    c_{1}^{G}\cdot W(\alpha) & \text{if $|S|\le 4$},
                   \end{cases}
\end{equation}
where $\alpha$ is a uniquely determined quadratic 
  character of $G$.
  \item If $S$  is metacyclic but not cyclic ({\bf invariant case}), then 
  \begin{equation}
   \lambda_{1}^{G}=\begin{cases}
                    \lambda_{1}^{V} & \text{if $G$ contains Klein's $4$ group $V$}\\
                    1 &  \text{if $G$ does not contain Klein's $4$ group $V$}.
                   \end{cases}
\end{equation}
  \item If $S$ is nontrivial and not metacyclic, then $\lambda_{1}^{G}=1$.
 \end{enumerate}
\end{thm}

\begin{proof}
 {\bf (1).} When $S=\{1\}$, i.e., $|G|$ is odd, we know from Theorem \ref{General Theorem for odd case} that $\lambda_{1}^{G}=1$.\\
 {\bf (2).} When $S=<g>$ is a nontrivial cyclic subgroup of $G$, $\Delta_{1}^{G}$ is nontrivial
 (because $\Delta_{1}^{G}(g)=(-1)^{|G|-\frac{|G|}{|S|}}=-1$) and 
 by Lemma \ref{Lemma 3.41}, $\Delta_{1}^{G}=\alpha$,
 where $\alpha$ is a uniquely determined quadratic character of $G$. Then we obtain
 \begin{center}
  $\lambda_{1}^{G}=c_{1}^{G}\cdot W(\Delta_{1}^{G})=c_{1}^{G}\cdot W(\alpha)$.
 \end{center}
 If $S$ is cyclic of order $2^n\ge 8$, then by Theorem \ref{Theorem 4.2} (case 1) and Theorem \ref{Theorem 4.1} 
 we have $c_{1}^{G}=1$, hence $\lambda_{1}^{G}=W(\alpha)$. \\
{\bf (3).} When the Sylow 2-subgroup $S\subset G$ is metacyclic but not cyclic (invariant case):\\
If $G$ contains Klein's $4$-group $V$, then $V\subset S$ because all Sylow $2$-subgroups are conjugate to each other. Then we have
$V<S<G$.
So from the properties of the $\lambda$-function, we have 
\begin{center}
 $\lambda_{1}^{G}=\lambda_{1}^{V}\cdot(\lambda_{V}^{G})^{4}=\lambda_{1}^{V}$.
\end{center}

Now assume that $G$ does not contain Klein's $4$-group. Then by assumption $S$ is metacyclic, 
not cyclic and does not contain Klein's $4$-group. We are going to see that this
implies: $S$ is generalized quaternion, and therefore, by Theorem \ref{Theorem 4.2}, $s_2(\text{Ind}_{1}^{G}(1))=0$, hence
$c_{1}^{G}=1$.

We use the following criterion for generalized quaternion groups: A finite
$p$-group in which there is a unique subgroup of order $p$ is either cyclic or generalized
quaternion (cf. \cite{MH}, p. 189, Theorem 12.5.2).\\
So it is enough to show: If $S$ does not contain Klein's $4$-group then $S$ has precisely one
subgroup of order $2$. We consider the center $Z(S)$ which is a nontrivial abelian
$2$-group. If it would be non-cyclic then $Z(S)$, hence $S$ would contain Klein's $4$-group. So
$Z(S)$ must be cyclic, hence we have precisely one subgroup $Z_2$ of order $2$ which sits in
the center of $S$. Now assume that $S$ has any other subgroup $U\subset S$ which is of order $2$.
Then $Z_2$ and $U$ would generate a Klein-$4$-group in $S$ which by our assumption cannot
exist. Therefore, $Z_2\subset S$ is the only subgroup of order $2$ in $S$. But $S$ is not cyclic, so it is
generalized quaternion.\\
Thus we can write $\lambda_{1}^{G}=c_{1}^{G}\cdot W(\Delta_{1}^{G})=W(\Delta_{1}^{G})$. Now to complete the proof we need to show 
that $W(\Delta_{1}^{G})=1$. This follows from Lemma \ref{Lemma 3.41}.\\
{\bf (4).} When the Sylow 2-subgroup $S$ is nontrivial and not metacyclic.\\
 We know that every cyclic group is also a metacyclic group. Therefore, when $S$ is nontrivial and not metacyclic, we are \textbf{not}
 in the position: $\text{rk}_2(G/G')=1$ and $|G'|$ is odd.
 This gives $\Delta_{1}^{G}=1$, hence $W(\Delta_{1}^{G})=1$. Furthermore,
by using the Theorem \ref{Theorem 4.2} and Theorem \ref{Theorem 4.1} we obtain the
 second Stiefel-Whitney class $s_2(\text{Ind}_{1}^{G}(1))=0$, hence $\lambda_{1}^{G}=c_{1}^{G}\cdot W(\Delta_{1}^{G})=1$.

This completes the proof.
 
\end{proof}

In the above Theorem \ref{Theorem 4.3} we observe that if we are in the \textbf{Case 3}, then by using Lemma \ref{Lemma 4.6} we can 
give complete formula of $\lambda_{1}^{G}$ for $p\ne 2$. 
Moreover, by using Proposition \ref{Proposition 3.5} in \textbf{case 2}, we boil down to 
the computation of $\lambda_{K/F}$, where $K/F$ is quadratic.

\begin{cor}\label{Lemma 3.10}
 Let $G=\text{Gal}(E/F)$ be a finite local Galois group of a non-Archimedean local field $F/\bbQ_p$ with $p\ne 2$. 
 Let $S\cong G/H$ be a nontrivial Sylow 2-subgroup of $G$, where $H$ is a uniquely determined Hall subgroup of odd order. Suppose that 
 we have a tower $E/K/F$
 of fields such that $S\cong \text{Gal}(K/F)$, $H=\text{Gal}(E/K)$ and $G=\text{Gal}(E/F)$. Let $\alpha$ be the uniquely
 determined quadratic character of $G$.
 \begin{enumerate}
  \item If $S\subset G$ is cyclic, then 
  \begin{enumerate}
 \item
 \begin{equation*}
 \lambda_{1}^{G}=\lambda_{K/F}^{\pm 1}=\begin{cases}
                  \lambda_{K/F}=W(\alpha) & \text{if $[E:K]\equiv 1\pmod{4}$}\\
                  \lambda_{K/F}^{-1}=W(\alpha)^{-1} & \text{if $[E:K]\equiv -1\pmod{4}$}.
                 \end{cases}
 \end{equation*}
 \item 
 \begin{equation*}
\lambda_{1}^{G}=\beta(-1)W(\alpha)^{\pm 1}=\beta(-1)\times\begin{cases}
                  W(\alpha) & \text{if $[E:K]\equiv 1\pmod{4}$}\\
                  W(\alpha)^{-1} & \text{if $[E:K]\equiv -1\pmod{4}$}
                 \end{cases}
 \end{equation*}
if $K/F$ is cyclic of order $4$ with generating character $\beta$ such that 
 $\beta^2=\alpha=\Delta_{K/F}$.
 \item 
 \begin{equation*}
  \lambda_{1}^{G}=\lambda_{K/F}^{\pm 1}=\begin{cases}
                  \lambda_{K/F}=W(\alpha) & \text{if $[E:K]\equiv 1\pmod{4}$}\\
                  \lambda_{K/F}^{-1}=W(\alpha)^{-1} & \text{if $[E:K]\equiv -1\pmod{4}$}
                 \end{cases}
 \end{equation*}
 if $K/F$ is cyclic of order $2^n\ge 8$.
\end{enumerate}
  And if the $4$th roots of unity are in $F$, we have 
$$\lambda_{1}^{G}=\lambda_{K/F}.$$ 
\item If $S$ is metacyclic but not cyclic and the $4$th roots of unity are in $F$, then 
\begin{enumerate}
 \item $\lambda_{1}^{G}=-1$ if $V\subset G$,
 \item $\lambda_{1}^{G}=1$ if $V\not\subset G$.
\end{enumerate}
\item The {\bf Case 4} of Theorem \ref{Theorem 4.3}  will not occur in this case.

\end{enumerate}

\end{cor}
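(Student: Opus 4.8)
The plan is to reduce everything to results already established --- Theorem~\ref{Theorem 4.3}, Proposition~\ref{Proposition 3.5}, and Lemma~\ref{Lemma 4.6} --- together with the decomposition of $\mathrm{Ind}_{K/F}1$ into characters; the substantive input is there, and what remains is bookkeeping with the exponent $[E:K]\bmod 4$ and one use of the functional equation. Since $p\ne 2$, Proposition~\ref{Proposition 3.5} gives that $H=\mathrm{Gal}(E/K)$ is a \emph{normal} odd-order Hall subgroup of $G$ with $G/H\cong S=\mathrm{Gal}(K/F)$, and that
\[
 \lambda_{1}^{G}=\lambda_{E/F}=(\lambda_{1}^{G/H})^{|H|}=\lambda_{K/F}^{[E:K]}.
\]
As $\lambda_{K/F}$ is a fourth root of unity (cf.\ \cite{BH}) and $[E:K]=|H|$ is odd, $\lambda_{K/F}^{[E:K]}$ equals $\lambda_{K/F}$ when $[E:K]\equiv 1\pmod 4$ and $\lambda_{K/F}^{-1}$ when $[E:K]\equiv -1\pmod 4$; this is the source of the exponent ``$\pm1$'' in every displayed formula. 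It thus suffices to evaluate $\lambda_{K/F}$.

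\textbf{Evaluation in the cyclic case (part (1)).} When $S$ is cyclic, $K/F$ is abelian, so $\mathrm{Ind}_{K/F}1=\bigoplus_{\chi\in\widehat{S}}\chi$ and hence $\lambda_{K/F}=\prod_{\chi\in\widehat{S}}W(\chi)$, with $W(1)=1$ (the additive character being the canonical one throughout). If $[K:F]=2$, then $\widehat{S}=\{1,\alpha\}$ with $\alpha=\Delta_{K/F}$, so $\lambda_{K/F}=W(\alpha)$. If $[K:F]=4$ with $\beta$ a generator of $\widehat{S}$ and $\alpha=\beta^{2}=\Delta_{K/F}$, then pairing $\beta$ with $\beta^{-1}=\beta^{3}$ and using~(\ref{eqn 2.3.23}) in the form $W(\beta)W(\beta^{-1})=\beta(-1)$ gives $\lambda_{K/F}=\beta(-1)\,W(\alpha)$. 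If $[K:F]=2^{n}\ge 8$, then $S=\mathrm{Gal}(K/F)$ is its own nontrivial cyclic Sylow $2$-subgroup of order $\ge 8$, so Theorem~\ref{Theorem 4.3}(2) gives $\lambda_{K/F}=W(\alpha)$ with $\alpha=\Delta_{K/F}$ the unique quadratic character of $S$, which by Lemma~\ref{Lemma 3.41} cuts out the unique quadratic subextension of $K/F$. Substituting these into $\lambda_{1}^{G}=\lambda_{K/F}^{[E:K]}$, and noting $\beta(-1)^{[E:K]}=\beta(-1)$ since $\beta(-1)=\pm1$, yields formulas (a), (b), (c).

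\textbf{The refinement when $F$ contains the fourth roots of unity.} Then $-1=i^{2}$ is a square in $F^{\times}$, so the quadratic character $\Delta_{K/F}$ is trivial at $-1$; by~(\ref{eqn 2.3.23}) applied to the self-dual character $\Delta_{K/F}$ one gets $W(\Delta_{K/F})^{2}=\Delta_{K/F}(-1)=1$, hence $W(\Delta_{K/F})=\pm1$. Since $\mathrm{Ind}_{K/F}1=\overline{\mathrm{Ind}_{K/F}1}$, Proposition~\ref{Proposition 3.1.1.1}(v) gives $c(\mathrm{Ind}_{K/F}1)=\pm1$, so $\lambda_{K/F}=c(\mathrm{Ind}_{K/F}1)\cdot W(\Delta_{K/F})=\pm1$. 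In particular $\lambda_{K/F}^{-1}=\lambda_{K/F}$, so in this situation every ``$\pm1$'' exponent above may be replaced by $1$.

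\textbf{Parts (2), (3), and the main difficulty.} When $S$ is metacyclic but not cyclic, Theorem~\ref{Theorem 4.3}(3) gives $\lambda_{1}^{G}=\lambda_{1}^{V}$ if $V\subset G$ and $\lambda_{1}^{G}=1$ otherwise; if $V\subset G$ then, since the fourth roots of unity lie in $F$ and hence $-1\in(F^{\times})^{2}$, Lemma~\ref{Lemma 4.6} gives $\lambda_{1}^{V}=-1$, which is (2)(a)--(b). For (3): $[K:F]=|S|$ is a power of $2$, hence coprime to $p$, so $K/F$ is tamely ramified, and the Galois group of a tamely ramified local extension is metacyclic (its inertia subgroup is cyclic with cyclic quotient); therefore $S\cong\mathrm{Gal}(K/F)$ is metacyclic and Case~4 of Theorem~\ref{Theorem 4.3} cannot occur. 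Given Theorem~\ref{Theorem 4.3} and Proposition~\ref{Proposition 3.5}, no deep step remains: the only points needing care are the $[E:K]\bmod 4$ bookkeeping, the sign $\beta(-1)$ in the order-$4$ case (which forces use of the functional equation rather than a naive cancellation of $W(\beta)$ against $W(\beta^{-1})$), and the fact, used in part (3), that the $2$-part of the Galois group here is always a tame, hence metacyclic, group.
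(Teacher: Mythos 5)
Your proposal is correct and follows essentially the same route as the paper: reduce via Proposition \ref{Proposition 3.5} to $\lambda_{1}^{G}=\lambda_{K/F}^{[E:K]}$, decompose $\mathrm{Ind}_{K/F}1$ into characters of the cyclic group (using the functional equation (\ref{eqn 2.3.23}) to produce $\beta(-1)$ in the order-$4$ case and Theorem \ref{Theorem 4.1}/Theorem \ref{Theorem 4.3}(2) for $|S|\ge 8$), observe that $i\in F$ forces $\lambda_{K/F}$ to be a sign so the exponent $\pm1$ collapses to $1$, use Lemma \ref{Lemma 4.6} for the metacyclic non-cyclic case, and rule out Case 4 by tameness of $K/F$ for $p\ne 2$. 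The only cosmetic difference is that you establish the sign property via $c(\rho)=\pm1$ and $W(\Delta_{K/F})^{2}=\Delta_{K/F}(-1)=1$, whereas the paper applies the functional equation directly to $\mathrm{Ind}_{K/F}1$; the two arguments are equivalent.
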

 
\begin{proof}
{\bf (1).}
In the case when $p\ne 2$ we know from Proposition \ref{Proposition 3.5} that the odd Hall-subgroup $H<G$ is actually a normal subgroup
with quotient $G/H\cong S$. So if $G=\text{Gal}(E/F)$ and $K/F$ is the maximal $2$-extension inside $E$ then $\text{Gal}(K/F)=G/H\cong S$.
And we obtain:
\begin{equation}\label{eqn 3.8}
 \lambda_{1}^{G}=(\lambda_{1}^{G/H})^{|H|}=\begin{cases}
                                            \lambda_{K/F} & \text{if $[E:K]=|H|\equiv 1\pmod{4}$}\\
                                            \lambda_{K/F}^{-1} & \text{if $[E:K]=|H|\equiv -1\pmod{4}$}.
                                           \end{cases}
\end{equation}
So it is enough to compute $\lambda_{K/F}$ for $\text{Gal}(K/F)\cong S$, i.e., we can reduce the computation to the case where $G=S$.

We know that $\lambda_{K/F}=W(\text{Ind}_{K/F}(1))=\prod_{\chi}W(\chi)$, where $\chi$ runs over all characters of the cyclic group 
$\text{Gal}(K/F)$. 
If $[K:F]=2$ then $\text{Ind}_{K/F}(1)=1+\alpha$, where $\alpha$ 
is a quadratic character of $F$ associated to $K$ by class field theory, hence $\alpha=\Delta_{K/F}$.
Thus $\lambda_{K/F}=W(\alpha)$.

If $[K:F]=4$ then $\text{Ind}_{K/F}(1)=1+\beta+\beta^2+\beta^3$, where $\beta^2=\alpha=\Delta_{K/F}$ and 
$\beta^3=\beta^{-1}$, hence by the functional equation of local constant we have:
\begin{center}
 $W(\beta)W(\beta^{-1})=\beta(-1)$.
\end{center}
We then obtain:
\begin{center}
 $\lambda_{K/F}=W(\text{Ind}_{K/F}(1))=W(\beta)W(\beta^2)W(\beta^3)=\beta(-1)\times W(\alpha)$.
\end{center}
If $S$ is cyclic of order $2^n\ge 8$, then by using Theorem \ref{Theorem 4.2} in Theorem \ref{Theorem 4.1}, 
we have $c_{1}^{S}=1$. Again from equation (\ref{eqn 3.31}) 
we have $W(\Delta_{1}^{S})=W(\alpha)$ because $\text{rk}_2(S)=1$,
where $\alpha$ is the uniquely determined quadratic character of $F$. Thus we obtain
$$\lambda_{K/F}=c_{1}^{S}\cdot W(\Delta_{1}^{S})=W(\alpha).$$

Finally by using the equation (\ref{eqn 3.8}) we obtain our desired results.

Now we denote $i=\sqrt{-1}$ and consider it in the algebraic closure of $F$. If $i\not\in F$ then $p\ne 2$ implies that 
$F(i)/F$ is the unramified extension of degree $2$.

Now assume that $i\in F$.
Then first of all we know that $\lambda_{H}^{G}$ is always is a {\bf sign} because 
\begin{center}
 $(\lambda_{H}^{G})^2=\Delta_{H}^{G}(-1)=\Delta_{H}^{G}(i^2)=1$.
\end{center}
Then the formula (\ref{eqn 3.8}) turns into 
\begin{center}
 $\lambda_{1}^{G}=(\lambda_{1}^{G/H})^{|H|}=\lambda_{1}^{G/H}$,
\end{center}
where $G/H=\text{Gal}(K/F)\cong S$. Therefore, in {\bf Case 2} of Theorem \ref{Theorem 4.3} we have now same formulas as 
above but with $1$ instead of $\pm 1$.

{\bf (2).}
Moreover, when $p\ne 2$
we know that always $\lambda_{1}^{V}=-1$ if $i\in F$ (cf. Lemma \ref{Lemma 4.6}). Again, if $V\subseteq S$, hence $V\subseteq G$, and
we have 
$$\lambda_{1}^{G}=\lambda_{1}^{V}\cdot(\lambda_{V}^{G})^4=\lambda_{1}^{V}.$$
Therefore, when $S$ is metacyclic but not cyclic we can simply say:\\
$\lambda_{1}^{G}=\lambda_{1}^{V}=-1$, if $V\subset G$,\\
and if $V\not\subset G$, then from Theorem \ref{Theorem 4.3}(3), we can conclude $\lambda_{1}^{G}=1$.

{\bf (3).}
If the base field $F$ is $p$-adic with $p\ne 2$, then as a Galois group $S$ corresponds to
a tamely ramified extension (because the degree $2^n$ is prime to $p$), and therefore, $S$ must
be metacyclic. Therefore, the \textbf{Case 4} of Theorem \ref{Theorem 4.3} can never occur if $p\ne 2$.

\end{proof}


\begin{rem}
If $S$ is cyclic of order $2^n\ge 8$, then we have two formulas:\\
$\lambda_{1}^{G}=W(\alpha)$ as obtained in Theorem \ref{Theorem 4.3}(2), and $\lambda_{1}^{G}=W(\alpha)^{\pm 1}$ in 
Corollary \ref{Lemma 3.10}. So we observe that for 
$|S|=2^n\ge 8$ and $|H|\equiv -1\pmod{4}$ the value of $W(\alpha)$ must be a sign for $p\ne 2$.

In {\bf Case 3} of Theorem \ref{Theorem 4.3} we notice that $\Delta_{1}^{G}\equiv 1$, hence $\lambda_{1}^{G}=c_{1}^{G}$.
We know also that this Deligne's constant $c_{1}^{G}$ takes values $\pm 1$ 
(cf. Proposition 2(v) on p. 119 of \cite{JT1}). 
Moreover, we also notice that the Deligne's constant of a representation is independent of the choice of the additive character.
Therefore, in Case 3 of Theorem \ref{Theorem 4.3}, 
$\lambda_{1}^{G}=c_{1}^{G}\in\{\pm 1\}$ will {\bf not} depend on the choice of the additive character. Since in Case 3 the 
computation of $\lambda_{1}^{G}$ does not depend on the choice of the additive character, we call this case the 
{\bf invariant case}.

Furthermore, in \cite{BK2}, Bruno Kahn deals with $s_2(r_G)$, where $r_G$ is a regular representation of $G$ in the
invariant case. For metacyclic $S$ of order $\ge 4$,
we have the presentation
\begin{center}
 $S\cong G(n,m,r,l)=<a,b: a^{2^n}=1, b^{2^m}=a^{2^r}, bab^{-1}=a^l>$\\
 with $n, m\ge 1$, $0\le r\le n$, $l$ an integer $\equiv 1\pmod{2^{n-r}}$, $l^{2^m}\equiv l\pmod{2^n}$.
\end{center}
When $S$ is \textbf{metacyclic but not cyclic} with $n\ge 2$, then $s_2(r_G)=0$ if and only if $m=1$ and 
$l\equiv-1\pmod{4}$ (cf. \cite{BK2}, p. 575). 
In this case $\lambda_{1}^{G}=c_{1}^{G}=1$.

\end{rem}

\begin{cor}\label{Corollary 3.13}
 Let $G$ be a finite abelian local Galois group of $F/\bbQ_p$, where $p\ne 2$. Let $S$ be a Sylow 2-subgroup of $G$.
 \begin{enumerate}
  \item If $\text{rk}_2(S)=0$, then we have $\lambda_{1}^{G}=1$.
  \item If $\text{rk}_2(S)=1$, then 
   \begin{equation}
   \lambda_{1}^{G}=\begin{cases}
                    W(\alpha) & \text{if $|S|=2^n\ge 8$}\\
                    c_{1}^{G}\cdot W(\alpha) & \text{if $|S|\le 4$},
                   \end{cases}
\end{equation}
where $\alpha$ is a uniquely determined quadratic 
  character of $G$.
  \item If $\text{rk}_2(S)=2$, we have 
  \begin{equation}
   \lambda_{1}^{G}=\begin{cases}
                    -1 & \text{if $-1\in F^\times$ is a square element}\\
                    1 & \text{if $-1\in F^\times$ is not a square element}.
                   \end{cases}
  \end{equation}
 \end{enumerate}
\end{cor}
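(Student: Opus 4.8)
The corollary is the abelian specialisation of Theorem \ref{Theorem 4.3}, so the plan is to match each hypothesis on $\mathrm{rk}_2(S)$ with one of the cases of that theorem and then quote it, bringing in Lemma \ref{Lemma 4.6} for the last case. I would first record two structural facts. (a) For a finite $2$-group $S$ all invariant factors are powers of $2$, so $\mathrm{rk}_2(S)$ equals the number of nontrivial invariant factors; hence $\mathrm{rk}_2(S)=0\iff S=\{1\}$, $\mathrm{rk}_2(S)=1\iff S$ is nontrivial cyclic, and $\mathrm{rk}_2(S)=2\iff S\cong\bbZ/2^{a}\times\bbZ/2^{b}$ with $1\le a\le b$ --- a group that is abelian on two generators, hence metacyclic but not cyclic. (b) Since $p\ne 2$ and $G$ is abelian, $G$ is a quotient of the pro-$2$ part of $F^{\times}$; writing $F^{\times}\cong\bbZ\times\mu_{q_F-1}\times U_{F}^{1}$ with $U_{F}^{1}$ pro-$p$ ($p$ odd), local class field theory shows this pro-$2$ part is $\bbZ_{2}$ times the $2$-primary part of $\mu_{q_F-1}$, a group of rank $2$; therefore $\mathrm{rk}_2(S)\le 2$ always, the three cases of the corollary are exhaustive, and the non-metacyclic case (Theorem \ref{Theorem 4.3}(4)) never occurs here.

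If $\mathrm{rk}_2(S)=0$ then $|G|$ is odd, so $\lambda_{1}^{G}=1$ by Theorem \ref{General Theorem for odd case} (equivalently Theorem \ref{Theorem 4.3}(1)). If $\mathrm{rk}_2(S)=1$ then $S$ is nontrivial cyclic --- the exceptional case --- so by Lemma \ref{Lemma 3.41} $\Delta_{1}^{G}$ is the unique quadratic character $\alpha$ of $G$, and Theorem \ref{Theorem 4.3}(2) gives $\lambda_{1}^{G}=W(\alpha)$ when $|S|=2^{n}\ge 8$ and $\lambda_{1}^{G}=c_{1}^{G}\cdot W(\alpha)$ when $|S|\le 4$.

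The substantive case is $\mathrm{rk}_2(S)=2$. Then $|S[2]|=4$, so $G\supset S\supset V:=S[2]$ with $V$ Klein's four-group, and Theorem \ref{Theorem 4.3}(3) gives $\lambda_{1}^{G}=\lambda_{1}^{V}$. To evaluate this in the form asserted I would first reduce to $G=S$: by Proposition \ref{Proposition 3.5} the normal odd Hall subgroup $H$ has $G/H\cong S$ and $\lambda_{1}^{G}=(\lambda_{1}^{G/H})^{|H|}$, and since $\mathrm{rk}_2(S)=2$ forces $\Delta_{1}^{S}=1$ by (\ref{eqn 3.31}), the regular representation $\mathrm{Ind}_{1}^{S}1$ is orthogonal with trivial determinant, so $\lambda_{1}^{G/H}=\lambda_{K/F}$ (with $\mathrm{Gal}(K/F)\cong S$) is a sign and the exponent $|H|$ is harmless, whence $\lambda_{1}^{G}=\lambda_{K/F}$. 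Now $K/F$ is abelian and tame: $\widehat{S}$ has exactly three nontrivial quadratic characters $\chi_{1},\chi_{2},\chi_{1}\chi_{2}$ (one unramified, two ramified of conductor $1$), and I would expand $\lambda_{K/F}=\prod_{\chi\in\widehat{S}}W(\chi)$ by pairing each $\chi$ of order $\ge 3$ with $\chi^{-1}$ (contributing $\chi(-1)$, with the accompanying unramified twists cancelling) and evaluating the three $W(\chi_{i})$ by the unramified-twisting formula for $W$, exactly as in the proof of Lemma \ref{Lemma 4.6}. This reduces everything to values $\chi(-1)$ and to $W$ of quadratic characters of $F^{\times}$, which by Theorem \ref{Theorem 2.9} are controlled by the class of $-1$ in $F^{\times}/{F^{\times}}^{2}$; the outcome should be $\lambda_{1}^{G}=-1$ when $-1\in F^{\times}$ is a square and $\lambda_{1}^{G}=1$ otherwise.

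The main obstacle is precisely this product computation in the case $\mathrm{rk}_2(S)=2$: tracking the dependence on whether $-1$ is a square in $F$ through the unramified twists and through the relations $W(\chi)W(\chi^{-1})=\chi(-1)$, and checking that the characters of order $\ge 3$ contribute trivially so that only the ``Klein-four part'' of the product remains. One point that needs care is that ``$\lambda_{1}^{G}=\lambda_{1}^{V}$'' in Theorem \ref{Theorem 4.3}(3) records $\lambda_{1}^{V}$ over the field fixed by $V$ rather than over $F$, so obtaining the answer purely in terms of $-1\in F^{\times}$ really does require the direct computation above (or an explicit comparison of the residue fields involved).
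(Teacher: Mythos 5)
Your cases (1) and (2) are correct and coincide with the paper's argument (the paper likewise quotes Theorem \ref{General Theorem for odd case} resp.\ Lemma \ref{Lemma 3.41} and Theorem \ref{Theorem 4.3}(2)), and your observation that $\mathrm{rk}_2(S)\le 2$ when $p\ne 2$ matches the paper's appeal to Corollary \ref{Lemma 3.10}(3). The problem is case (3). The paper's own proof is just the tower identity $\lambda_{1}^{G}=\lambda_{1}^{V}\cdot(\lambda_{V}^{G})^{4}=\lambda_{1}^{V}$ followed by Lemma \ref{Lemma 4.6} read over $F$; the subtlety you flag at the end --- that $\lambda_{1}^{V}$ is the lambda factor of $E/E^{V}$, so Lemma \ref{Lemma 4.6} applies over the fixed field $E^{V}$, not over $F$ --- is exactly the point the paper glosses over. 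But your proposed repair does not work: the assertion that ``the characters of order $\ge 3$ contribute trivially so that only the Klein-four part of the product remains'' is false in general, and this is precisely the step you left unverified. Take $F=\bbQ_3$ and $K=F_4\cdot\bbQ_3(\sqrt{3})$, with $F_4/\bbQ_3$ unramified of degree $4$; then $G=S\cong\bbZ/4\times\bbZ/2$, $\mathrm{rk}_2(S)=2$, and $\widehat{S}=\{\chi^{a}\eta^{b}\}$ with $\chi$ unramified of order $4$ and $\eta$ the ramified quadratic character of $\bbQ_3(\sqrt3)$. With $n(\psi_{\bbQ_3})=0$ one gets $W(\chi^{a})=1$, the order-four pair $\{\chi\eta,\chi^{-1}\eta\}$ contributes $(\chi\eta)(-1)=\eta(-1)=-1$, and the quadratic block gives $W(\chi^{2})W(\eta)W(\chi^{2}\eta)=\chi^{2}(3)\,W(\eta)^{2}=-\eta(-1)=+1$; hence $\lambda_{K/\bbQ_3}=-1$ even though $-1$ is not a square in $\bbQ_3$. (This is consistent with the reduction you and the paper both use: here $E^{V}$ is the unramified quadratic extension of $\bbQ_3$, where $-1$ \emph{is} a square, so Lemma \ref{Lemma 4.6} over $E^{V}$ gives $-1$.)

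So the gap in your proposal is concrete: the final ``outcome should be'' sentence cannot be established, because the ramified characters of order $4$ contribute $\eta(-1)$ through the functional equation and do not cancel, and the sign one actually obtains is governed by whether $-1$ is a square in $E^{V}$ (equivalently by $q_{E^{V}}\bmod 4$), not in $F$. In other words, your suspicion about the base-field shift is well founded, but it is not cured by the product expansion; it shows that case (3) as stated (and the paper's one-line proof of it) only holds as written when $E^{V}$ and $F$ have the same behaviour of $-1$ --- e.g.\ when $S=V$, or when $\mu_4\subset F$ as in Corollary \ref{Lemma 3.10}(2) --- and in general the dichotomy must be formulated over the fixed field of $V$. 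A correct write-up of case (3) should therefore either add such a hypothesis or state the answer as $\lambda_{1}^{G}=\lambda_{E/E^{V}}$ evaluated by Lemma \ref{Lemma 4.6} over $E^{V}$.
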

\begin{proof}
 This proof is straightforward from Theorem \ref{Theorem 4.3} and Corollary \ref{Lemma 3.10}.
 Here $S$ is abelian and normal because $G$ is abelian. When $\text{rk}_2(S)=0$, $G$ is of odd order, hence 
 $\lambda_{1}^{G}=1$. When $\text{rk}_2(S)=1$, $S$ is a cyclic group because $S\cong\bbZ/{2^n}\bbZ$ for some $n\ge 1$.
 Then we are in the Case 2 of Theorem \ref{Theorem 4.3}. From the Case 4 of Corollary \ref{Lemma 3.10}, 
 we can say that the case $\text{rk}_2(S)\ge 3$
 will not occur here because $p\ne 2$ and $S$ is the Galois group of a tamely ramified extension.
 
 So we are left to check the case $\text{rk}_2(S)=2$. In this case $S$ is metacyclic and contains Klein's 4-group, i.e., 
 $V\subseteq S\subseteq G$.
 Then from the properties of $\lambda$-functions and Lemma \ref{Lemma 4.6} we obtain
 \begin{equation}
  \lambda_{1}^{G}=\lambda_{1}^{V}\cdot(\lambda_{V}^{G})^4=\lambda_{1}^{V}=
  \begin{cases}
                    -1 & \text{if $-1\in F^\times$ is a square element}\\
                    1 & \text{if $-1\in F^\times$ is not a square element}.
                   \end{cases}
 \end{equation}

\end{proof}



\section{Appendix}

In the following lemma we compute an explicit formula for $\lambda_{K/F}(\psi_F)$,
where $K/F$ is a quadratic unramified extension of 
$F$. In general, for {\bf any quadratic extension} $K/F$,
we can write $\mathrm{Ind}_{K/F}1=1_F\oplus\omega_{K/F}$, where $\omega_{K/F}$ is a quadratic character of 
$F^\times$ associated to $K$ by class field theory and $1_F$ is the trivial character of $F^\times$.
Now by the definition of the $\lambda$-function we have:
\begin{equation}\label{eqn 3.333}
 \lambda_{K/F}=W(\mathrm{Ind}_{K/F}1)=W(\omega_{K/F}).
\end{equation}
So, $\lambda_{K/F}$ is the local constant of the quadratic character $\omega_{K/F}$ corresponding to $K/F$. 
\begin{lem}\label{Lemma 3.13}
 Let $K$ be the quadratic unramified extension of $F/\bbQ_p$ and let $\psi_F$ be the canonical additive character of $F$ with 
 conductor $n(\psi_F)$. Then 
 \begin{equation}
  \lambda_{K/F}(\psi_F)=(-1)^{n(\psi_F)}.
 \end{equation}
\end{lem}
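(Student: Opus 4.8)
The plan is to express $\lambda_{K/F}(\psi_F)$ as the abelian local constant of a single unramified quadratic character and then evaluate it directly from the explicit formula for local constants of unramified characters.

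First I would use the decomposition recorded just before the statement: for the quadratic extension $K/F$ one has $\mathrm{Ind}_{K/F}1 = 1_F\oplus\omega_{K/F}$, so by equation (\ref{eqn 3.333}), $\lambda_{K/F}(\psi_F) = W(\omega_{K/F},\psi_F)$, where $\omega_{K/F}$ is the quadratic character of $F^\times$ attached to $K/F$ by class field theory. Since $K/F$ is unramified its ramification break is $t=-1$, so $a(\omega_{K/F}) = t+1 = 0$; that is, $\omega_{K/F}$ is an \emph{unramified} character of $F^\times$.

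Next I would apply equation (\ref{eqn 2.3.5}): for an unramified character $\chi$ and any nontrivial additive character $\psi$, $W(\chi,\psi,c)=\chi(c)$, where $c$ is any element of $F^\times$ with $\nu_F(c) = a(\chi)+n(\psi)$. (Recall that $W(\chi,\psi,c)$ depends only on the exponent $\nu_F(c)$, so this is legitimate for the canonical character $\psi_F$ with its particular conductor $n(\psi_F)$.) Here $a(\omega_{K/F})=0$, so I may take $c = \pi_F^{\,n(\psi_F)}$, giving $\lambda_{K/F}(\psi_F) = \omega_{K/F}\bigl(\pi_F^{\,n(\psi_F)}\bigr) = \omega_{K/F}(\pi_F)^{\,n(\psi_F)}$.

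Finally I would identify $\omega_{K/F}(\pi_F)$. As $K/F$ is unramified of degree $2$ and we have fixed $\pi_K=\pi_F$, the norm group is $N_{K/F}(K^\times) = U_F\cdot\langle\pi_F^{2}\rangle$, of index $2$ in $F^\times$, and $\omega_{K/F}$ is the nontrivial character of $F^\times/N_{K/F}(K^\times)$; hence $\omega_{K/F}(\pi_F) = -1$, and therefore $\lambda_{K/F}(\psi_F) = (-1)^{n(\psi_F)}$. There is no serious obstacle here: the argument is essentially bookkeeping, the only points needing a word of care being the validity of the unramified formula for $\psi_F$ with arbitrary conductor (handled by the dependence of $W$ on $\nu_F(c)$ alone) and the standard fact that the unramified quadratic character of $F^\times$ sends a uniformizer to $-1$.
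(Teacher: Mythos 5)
Your proposal is correct and follows essentially the same route as the paper: reduce to $W(\omega_{K/F},\psi_F)$ via $\mathrm{Ind}_{K/F}1=1_F\oplus\omega_{K/F}$, note that $\omega_{K/F}$ is unramified since the ramification break is $t=-1$, evaluate with the unramified formula $W(\chi,\psi,c)=\chi(c)$ at $c=\pi_F^{\,n(\psi_F)}$, and use $\omega_{K/F}(\pi_F)=-1$ because $\pi_F$ is not a norm from $K^\times$. The only difference is cosmetic: you spell out the norm group $U_F\cdot\langle\pi_F^2\rangle$, where the paper simply cites $\pi_F\notin N_{K/F}(K^\times)$.
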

\begin{proof}
When $K/F$ is the quadratic unramified extension, 
 it is easy to see that in equation (\ref{eqn 3.333})
 $\omega_{K/F}$ is an unramified character because here the ramification break $t$ is $-1$.
 Then from equation (\ref{eqn 2.3.5}) have:
 \begin{equation*}
  W(\omega_{K/F})=\omega_{K/F}(c).
 \end{equation*}
Here $\nu_F(c)=n(\psi_F)$. Therefore, from equation (\ref{eqn 3.333}) we obtain:
\begin{equation}\label{eqn 3.14}
 \lambda_{K/F}=\omega_{K/F}(\pi_F)^{n(\psi_F)}.
\end{equation}
We also know that  $\pi_F\notin N_{K/F}(K^\times)$, and hence 
$\omega_{K/F}(\pi_F)=-1$. Therefore, from equation (\ref{eqn 3.14}),
we have
\begin{equation}\label{eqn 0.4}
 \lambda_{K/F}=(-1)^{n(\psi_F)}.
\end{equation}

\end{proof}

\begin{lem}\label{Corollary 3.3.7}
The lambda function for a finite unramified extension of a non-Archimedean local field is always a sign. 
\end{lem}
 \begin{proof}
  Let $K$ be a finite unramified extension of a non-Archimedean local field $F$. 
  We know that the unramified extensions are Galois, and their corresponding
  Galois groups are cyclic. 
 Let $G=\text{Gal}(K/F)$, hence $G$ is cyclic. 
 
 When the degree of $K/F$ is odd, from Theorem \ref{General Theorem for odd case} 
 we have $\lambda_1^G=\lambda_{K/F}=1$ because $K/F$ is Galois.
 
 When the degree of $K/F$ is even, we have $\text{rk}_2(G)=1$ because $G$ is cyclic. 
  So we can write 
  $\Delta_{1}^{G}=\alpha$, where $\alpha$
  corresponds to the quadratic unramified extension. Then $\Delta_{1}^{G}(-1)=\alpha(-1)=1$, because 
  $-1$ is a norm, hence from the functional equation (\ref{eqn 2.3.23}) we have
  $$(\lambda_{1}^{G})^2=1.$$
 \end{proof}

\begin{thm}\label{Theorem 3.6}
 Let $K/F$ be a finite unramified extension with even degree and let $\psi_F$ be the canonical additive character of $F$ 
 with conductor $n(\psi_F)$. Then
 \begin{equation}
  \lambda_{K/F}=(-1)^{n(\psi_F)}.
\end{equation}
\end{thm}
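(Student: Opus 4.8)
The plan is to decompose $\mathrm{Ind}_{K/F}1$ into characters and reduce to the quadratic unramified case treated in Lemma~\ref{Lemma 3.13}. Since $K/F$ is unramified it is Galois with $G=\mathrm{Gal}(K/F)$ cyclic of even order $d=[K:F]$, so
\begin{equation*}
\mathrm{Ind}_{K/F}1=\bigoplus_{\chi\in\widehat G}\chi,
\end{equation*}
and by class field theory each $\chi$ is a character of $F^\times$; because $K/F$ is unramified, every $\chi$ is unramified, i.e.\ $a(\chi)=0$. By additivity of the local constant in the representation variable,
\begin{equation*}
\lambda_{K/F}(\psi_F)=W(\mathrm{Ind}_{K/F}1,\psi_F)=\prod_{\chi\in\widehat G}W(\chi,\psi_F).
\end{equation*}

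First I would dispose of all but one factor. The trivial character gives $W(1,\psi_F)=1$ by \eqref{eqn 2.3.5}. For each $\chi$ with $\chi^2\neq 1$ the pair $\{\chi,\chi^{-1}\}$ contributes $W(\chi,\psi_F)\,W(\chi^{-1},\psi_F)=\chi(-1)=1$ by the functional equation \eqref{eqn 2.3.23}, using that $\chi$ is unramified and $-1\in U_F$. Since $G$ is cyclic of even order, $\mathrm{rk}_2(G)=1$, so $\widehat G$ contains a \emph{unique} character $\alpha$ of order $2$; all other characters pair off as above. Hence $\lambda_{K/F}(\psi_F)=W(\alpha,\psi_F)$.

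It remains to identify $\alpha$. Its kernel has index $2$ in $G$ and cuts out the unique quadratic subextension $F_2/F$ inside $K/F$; since every subextension of the unramified extension $K/F$ is unramified, $F_2/F$ is the quadratic unramified extension and $\alpha=\omega_{F_2/F}$. Therefore $W(\alpha,\psi_F)=\lambda_{F_2/F}(\psi_F)$, and Lemma~\ref{Lemma 3.13} applied to $F_2/F$ yields $\lambda_{F_2/F}(\psi_F)=(-1)^{n(\psi_F)}$, completing the proof. Alternatively one can skip the pairing: by \eqref{eqn 2.3.5} each $W(\chi,\psi_F)=\chi(c)$ with $\nu_F(c)=n(\psi_F)$, so the product equals $\bigl(\prod_{\chi\in\widehat G}\chi\bigr)(c)=\Delta_{K/F}(c)$, and by \eqref{eqn 3.31} $\Delta_{K/F}=\alpha$ since $\mathrm{rk}_2(G)=1$; as $\alpha$ is trivial on $U_F$ with $\alpha(\pi_F)=-1$, this is $(-1)^{n(\psi_F)}$.

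The argument is essentially bookkeeping, so I do not anticipate a genuine obstacle; the only step needing a word of justification is that the unique order-$2$ character of $G$ corresponds to an unramified quadratic extension, which is immediate, after which Lemma~\ref{Lemma 3.13} does the remaining work.
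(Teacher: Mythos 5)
Your proof is correct, but it takes a genuinely different route from the paper's. The paper argues by the tower formula (\ref{eqn 2.11}): it takes the unique subextension $K'/F$ with $[K:K']=2$ at the \emph{top} of the tower, writes $\lambda_{K/F}=\lambda_{K/K'}\cdot(\lambda_{K'/F})^2$, kills the square using Lemma \ref{Corollary 3.3.7} (the $\lambda$-function of an unramified extension is a sign), evaluates $\lambda_{K/K'}=(-1)^{n(\psi_{K'})}$ by Lemma \ref{Lemma 3.13} applied over the base $K'$, and then needs Lemma \ref{Lemma 3.4} to transfer the conductor, $n(\psi_{K'})=n(\psi_F)$. You instead stay over the base $F$: you decompose $\mathrm{Ind}_{K/F}1$ as the sum of all (unramified) characters of the cyclic group $G$, use additivity of $W$ together with the functional equation (\ref{eqn 2.3.23}) (or, in your alternative, formula (\ref{eqn 2.3.5}) plus Miller's determinant identity (\ref{eqn 3.31})) to reduce everything to the unique quadratic character $\alpha$, identify $\alpha$ with the quadratic unramified subextension at the \emph{bottom}, and finish with Lemma \ref{Lemma 3.13} over $F$. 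What each buys: the paper's argument is shorter given its earlier lemmas but leans on three of them (the tower property, the sign lemma, and the conductor formula), while yours needs none of these and is self-contained modulo Lemma \ref{Lemma 3.13}; your second variant, $\lambda_{K/F}=\Delta_{K/F}(c)=(-1)^{\nu_F(c)}$, is in fact the same mechanism the paper itself uses later in Corollary \ref{Corollary 3.7} and Lemma \ref{Lemma 4.6}, and it makes transparent why only the parity of $n(\psi_F)$ matters. The only point deserving the word of justification you gave is that the order-two character of $G$ is unramified, which is immediate since all subextensions of $K/F$ are unramified.
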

\begin{proof}
When $K/F$ is a quadratic unramified extension, by Lemma \ref{Lemma 3.13}, we have $\lambda_{K/F}=(-1)^{n(\psi_F)}$.
We also know that if $K/F$ is unramified of even degree then 
 we have precisely one subextension $K'/F$ in $K/F$ such that $[K:K']=2$. Then
 \begin{center}
  $\lambda_{K/F}=\lambda_{K/K'}\cdot(\lambda_{K'/F})^2=\lambda_{K/K'}=(-1)^{n(\psi_{K'})}=(-1)^{n(\psi_F)}$,
 \end{center}
because in the unramified case the $\lambda$-function is always a sign (cf. Lemma \ref{Corollary 3.3.7}), and 
from Corollary 1 on p. 142 of \cite{AW}, $n(\psi_{K'})=n(\psi_F)$.
 
This completes the proof.

\end{proof}

In the following corollary, we show that the above Theorem \ref{Theorem 3.6} is true for any nontrivial arbitrary additive character. 
\begin{cor}\label{Corollary 3.7}
 Let $K/F$ be a finite unramified extension of even degree and let $\psi$ be any nontrivial additive character of $F$ 
 with conductor $n(\psi)$. Then
 \begin{equation}
  \lambda_{K/F}(\psi)=(-1)^{n(\psi)}.
\end{equation}
\end{cor}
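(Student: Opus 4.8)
The plan is to reduce the general additive character $\psi$ to the canonical additive character $\psi_F$ and invoke Theorem~\ref{Theorem 3.6}. Any nontrivial additive character of $F$ has the form $\psi=b\psi_F$ for a unique $b\in F^\times$, where $(b\psi_F)(x)=\psi_F(bx)$, and $n(\psi)=n(b\psi_F)=n(\psi_F)-\nu_F(b)$. By Lemma~\ref{Lemma 3.2} (equation (\ref{eqn 3.121})) we have
\begin{equation*}
 \lambda_{K/F}(b\psi_F)=\Delta_{K/F}(b)\cdot\lambda_{K/F}(\psi_F),
\end{equation*}
where $\Delta_{K/F}=\det(\mathrm{Ind}_{K/F}1)$ is the discriminant character of $F^\times$. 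So it suffices to show that $\Delta_{K/F}(b)=(-1)^{\nu_F(b)}$, for then combining with Theorem~\ref{Theorem 3.6} gives $\lambda_{K/F}(b\psi_F)=(-1)^{\nu_F(b)}\cdot(-1)^{n(\psi_F)}=(-1)^{n(\psi_F)-\nu_F(b)}=(-1)^{n(\psi)}$, as desired.

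The key step is therefore to identify the discriminant character $\Delta_{K/F}$ in the unramified even-degree case. Since $K/F$ is unramified, the quadratic subextension $K'/F$ (with $[K:K']=2$) is itself unramified, and by the factorization (\ref{eqn 2.13}) together with the fact that $\Delta_{K/K'}$ pulled back to $F^\times$ contributes only squares in the exponent (indeed $\Delta_{E/F}=\Delta_{E/K'}|_{F^\times}\cdot\Delta_{K'/F}^{[K:K']}$ and the second factor is a square), we get $\Delta_{K/F}=\Delta_{K'/F}\cdot(\text{something trivial on the relevant classes})$; more cleanly, $\Delta_{K/F}$ is the character of $F^\times$ attached by class field theory to the quadratic subextension, i.e.\ to $K'/F$, which is the unramified quadratic extension. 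Hence $\Delta_{K/F}=\omega_{K'/F}$ is the \emph{unramified} quadratic character, so $\Delta_{K/F}(u)=1$ for every unit $u\in U_F$ and $\Delta_{K/F}(\pi_F)=-1$ since $\pi_F\notin N_{K'/F}(K'^\times)$. Writing $b=u\pi_F^{\nu_F(b)}$ gives $\Delta_{K/F}(b)=(-1)^{\nu_F(b)}$.

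The main obstacle is the bookkeeping in the previous paragraph: one must be careful that the discriminant character $\Delta_{K/F}$ for a degree-$2^m$ unramified extension really is unramified (equivalently, that the discriminant $D_{K/F}$ is a unit, which holds precisely because $K/F$ is unramified, as recalled after the Different-and-Discriminant definition), and that raising a character to an even power kills any ramified contribution. An alternative route that avoids the character computation entirely: use the tower formula (\ref{eqn 2.11}), $\lambda_{K/F}(\psi)=\lambda_{K/K'}(\psi\circ\mathrm{Tr}_{K'/F})\cdot\lambda_{K'/F}(\psi)^{2}=\lambda_{K/K'}(\psi\circ\mathrm{Tr}_{K'/F})$, then apply Lemma~\ref{Lemma 3.13} to the quadratic unramified extension $K/K'$ with the character $\psi\circ\mathrm{Tr}_{K'/F}$, whose conductor by Lemma~\ref{Lemma 3.4} equals $e_{K'/F}\cdot n(\psi)+\nu_{K'}(\mathcal D_{K'/F})=n(\psi)$ since $K'/F$ is unramified. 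This yields $\lambda_{K/F}(\psi)=(-1)^{n(\psi\circ\mathrm{Tr}_{K'/F})}=(-1)^{n(\psi)}$ immediately, and is the proof I would actually write.
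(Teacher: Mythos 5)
Your main argument is essentially the paper's own proof of this corollary: write $\psi=b\psi_F$, use equation (\ref{eqn 3.121}) to get $\lambda_{K/F}(\psi)=\Delta_{K/F}(b)\,\lambda_{K/F}(\psi_F)$, identify $\Delta_{K/F}$ as the unramified quadratic character of $F^\times$, and invoke Theorem \ref{Theorem 3.6}. The only difference is in the identification step: the paper gets it from Lemma \ref{Lemma 3.41} (the Galois group of an unramified extension is cyclic, so for even degree $\Delta_{1}^{G}$ is its unique quadratic character, which is unramified), while you argue through the discriminant character and (\ref{eqn 2.13}); both are sound. Two slips in your write-up are harmless but worth fixing: with the paper's conventions $n(b\psi_F)=n(\psi_F)+\nu_F(b)$, not $n(\psi_F)-\nu_F(b)$ (only the parity matters, so the conclusion survives), and your $K'$ with $[K:K']=2$ is not "the quadratic subextension" of $F$ — what you actually use is $\Delta_{K/K'}$ restricted to $F^\times$, equivalently the character of the unramified quadratic subfield of $K/F$, and this is indeed trivial on $U_F$ and $-1$ at $\pi_F$. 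Your alternative route, $\lambda_{K/F}(\psi)=\lambda_{K/K'}(\psi\circ\mathrm{Tr}_{K'/F})\cdot\lambda_{K'/F}(\psi)^{2}$ followed by the quadratic unramified computation, mirrors the paper's proof of Theorem \ref{Theorem 3.6} rather than of this corollary and does work, but it needs two remarks you omitted: discarding $\lambda_{K'/F}(\psi)^{2}$ requires knowing this lambda is a sign for arbitrary $\psi$ (it is, by the functional equation (\ref{eqn 2.3.23}) since $\Delta_{K'/F}$ is unramified and hence trivial at $-1$; compare Lemma \ref{Corollary 3.3.7}), and Lemma \ref{Lemma 3.13} is stated only for the canonical character, so you should either note that its proof (via $W(\chi,\psi,c)=\chi(c)$ for unramified $\chi$, equation (\ref{eqn 2.3.5})) goes through verbatim for any nontrivial additive character, or rephrase to avoid appearing to assume the very generalization being proved.
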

\begin{proof}
 We know that any nontrivial additive character
 $\psi$ is of the form $\psi(x):=b\psi_F(x)$, for all $x\in F$, for some unique $b\in F^\times$. 
 By the definition of the conductor of an additive character of $F$, we obtain:
 \begin{center}
  $n(\psi)=n(b\psi_F)=\nu_F(b)+n(\psi_F)$.
 \end{center}
 Now let $G=\text{Gal}(K/F)$ be the Galois group of the extension $K/F$. Since $K/F$ is unramified, $G$ is {\bf cyclic}.
 Let $S$ be a Sylow $2$-subgroup of $G$. Here $S$
is nontrivial cyclic because the degree of $K/F$ is even and $G$ is cyclic. 
Then from Lemma \ref{Lemma 3.41} we have $\Delta_{1}^{G}=\Delta_{K/F}\not\equiv 1$.
Therefore, $\Delta_{K/F}(b)=(-1)^{\nu_F(b)}$ is the uniquely determined unramified quadratic character of $F^\times$.
Now from equation (\ref{eqn 3.121}) we have:
\begin{align*}
 \lambda_{K/F}(\psi)
 &=\lambda_{K/F}(b\psi_F)\\
 &=\Delta_{K/F}(b)\lambda_{K/F}(\psi_F)\\
 &=(-1)^{\nu_F(b)}\times(-1)^{n(\psi_F)},\quad \text{from Theorem $\ref{Theorem 3.6}$}\\
 &=(-1)^{\nu_F(b)+n(\psi_F)}\\
 &=(-1)^{n(\psi)}.
\end{align*}
Therefore,
when $K/F$ is an unramified extension of even degree, we have
\begin{equation}
 \lambda_{K/F}(\psi)=(-1)^{n(\psi)},
\end{equation}
where $\psi$ is any nontrivial additive character of $F$.
 
\end{proof}

In the following theorem we give an
explicit formula of $\lambda_{K/F}$, when $K/F$ is an even degree Galois extension
with odd ramification index.

\begin{thm}\label{Theorem 3.8}
 Let $K$ be an even degree Galois extension of a non-Archimedean local field $F$ of odd ramification index. 
 Let $\psi$ be a nontrivial additive character of $F$. Then 
 \begin{equation}
  \lambda_{K/F}(\psi)=(-1)^{n(\psi)}.
 \end{equation}

\end{thm}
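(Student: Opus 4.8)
The plan is to reduce to the two cases already settled in this section, namely even-degree \emph{unramified} extensions (Corollary \ref{Corollary 3.7}) and odd-degree \emph{Galois} extensions (Theorem \ref{General Theorem for odd case}), by factoring $K/F$ through its maximal unramified subextension. Write $n=[K:F]$, $e=e_{K/F}$, $f=f_{K/F}$, so that $n=ef$. Since $n$ is even while $e$ is odd by hypothesis, the residue degree $f$ must be even. Let $K_0$ be the maximal unramified subextension of $K/F$; then $K_0/F$ is unramified of degree $f$ (even), and $K/K_0$ is totally ramified of degree $e$ (odd). Moreover $K/K_0$ is Galois because $K/F$ is, and $K_0/F$ is Galois as well (unramified extensions are cyclic). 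Thus the tower $F\subset K_0\subset K$ separates the problem cleanly along the unramified/odd dichotomy.

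Next I would apply the inductivity of the $\lambda$-function to this tower: for any nontrivial additive character $\psi$ of $F$,
\begin{equation*}
 \lambda_{K/F}(\psi)=\lambda_{K/K_0}(\psi\circ\mathrm{Tr}_{K_0/F})\cdot\lambda_{K_0/F}(\psi)^{[K:K_0]}.
\end{equation*}
For the first factor, $K/K_0$ is Galois of odd degree $e$, so by Lemma \ref{Lemma 4.1} we have $\Delta_{K/K_0}\equiv 1$; hence by equation (\ref{eqn 3.121}) (Lemma \ref{Lemma 3.2}) the value $\lambda_{K/K_0}$ is unchanged under any change of additive character, and Theorem \ref{General Theorem for odd case}, applied with base field $K_0$, gives $\lambda_{K/K_0}(\psi\circ\mathrm{Tr}_{K_0/F})=1$. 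For the second factor, $K_0/F$ is unramified of even degree, so Corollary \ref{Corollary 3.7} gives $\lambda_{K_0/F}(\psi)=(-1)^{n(\psi)}$; raising to the power $[K:K_0]=e$, which is odd, preserves this sign, so $\lambda_{K_0/F}(\psi)^{e}=\left((-1)^{n(\psi)}\right)^{e}=(-1)^{n(\psi)}$. Multiplying the two factors yields $\lambda_{K/F}(\psi)=(-1)^{n(\psi)}$, which is the assertion.

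I do not expect a genuine obstacle here; the computation is essentially a bookkeeping exercise once the maximal unramified subextension is inserted. The two points that require a moment's care are the elementary remark that an odd ramification index forces an even residue degree (so that the unramified part really is of even degree and Corollary \ref{Corollary 3.7} applies), and the fact that the odd-degree Galois $\lambda$-value equals $1$ for \emph{every} nontrivial additive character, not merely the canonical one — this is exactly what lets us discard the first factor $\lambda_{K/K_0}(\psi\circ\mathrm{Tr}_{K_0/F})$ even though $\psi\circ\mathrm{Tr}_{K_0/F}$ need not be the canonical character of $K_0$, and it follows from $\Delta_{K/K_0}\equiv 1$ together with (\ref{eqn 3.121}).
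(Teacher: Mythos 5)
Your proposal is correct and follows essentially the same route as the paper: factor $K/F$ through the maximal unramified subextension, kill the totally ramified odd-degree Galois part by Theorem \ref{General Theorem for odd case}, and evaluate the even-degree unramified part as $(-1)^{n(\psi)}$ raised to an odd power. Your extra remark that $\Delta_{K/K_0}\equiv 1$ makes the first factor independent of the additive character is a careful touch the paper glosses over, but the argument is the same.
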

\begin{proof}
In general,  
any extension $K/F$ of local fields has a uniquely determined
maximal subextension  $F'/F$ in $K/F$ which is unramified. Then we have 
$e_{K/F}=[K:F']$ because $e_{K/F}=e_{F'/F}\cdot e_{K/F'}=e_{K/F'}$ and $K/F'$ is a totally ramified extension. 
By the given condition, here $K/F$ is an even degree Galois extension with odd ramification index $e_{K/F}$, 
hence $K/F'$ is an odd degree Galois extension.
Now from the properties of the $\lambda$-functions and Theorem \ref{General Theorem for odd case} we have 
\begin{center}
 $\lambda_{K/F}=\lambda_{K/F'}\cdot(\lambda_{F'/F})^{e_{K/F}}=(-1)^{e_{K/F}\cdot n(\psi_F)}=(-1)^{n(\psi_F)}$,
\end{center}
because $K/F'$ is an odd degree Galois extension and $F'/F$ is an unramified extension.

\end{proof}

\begin{rem}
Finally we observe that Theorem \ref{Theorem 4.3} and Corollary \ref{Lemma 3.10} are the general results on
$\lambda_{1}^{G}=\lambda_{E/F}$, where $E/F$ is a Galois extension with Galois group $G=\rm{Gal}(E/F)$. 
And {\bf the general
results leave open} the computation of $W(\alpha)$, where $\alpha$ is a quadratic character of $G$. For
such a quadratic character we
can have three cases:
\begin{enumerate}
 \item unramified, this is Theorem \ref{Theorem 3.6}, 
 \item tamely ramified, this is Theorem 3.4.10 on p. 67 of \cite{SABT}, 
 \item wildly ramified, its explicit computation is still open.
\end{enumerate}

\end{rem}

\vspace{1cm}

\newpage

\textbf{Acknowledgements.} I would like to thank Prof E.-W. Zink, Humboldt University, Berlin
for suggesting this problem and his constant 
valuable advice and comments. I  also thank my adviser Prof. Rajat Tandon for his continuous encouragement. I
express my gratitude to Prof. Dipendra Prasad, TIFR, Bombay, for his valuable comments at the early stage of this 
project. I would also like to thank to the referees for their valuable comments for the improvement
of the paper.



\begin{thebibliography}{99}



\bibitem{SABT}S.A. Biswas, Local Constants for Galois Representations - Some Explicit Results, Ph.D. Thesis,
\url{http://arxiv.org/pdf/1603.06089.pdf}.

\bibitem{BH} C.J. Bushnell, G. Henniart, The local Langlands conjecture for $GL(2)$, Springer-Verlag, 2006. 

\bibitem{PC} P.L. Clark, Wilson's theorem in a finite commutative group: A elementary proof,
\url{http://www.math.uga.edu/~pete/wilson_easy.pdf}.
\bibitem{D1}P. Deligne, Les constantes des \'{e}quations fonctionnelle des fonctions L, in Modular functions of 
one variable II, Lecture Notes in Mathematics {\bf 349} (1972), 501-597, Springer-Verlag, Berlin-Heidelberg-New York. 
\bibitem{D2}\textemdash, Les constantes locales l'\'{e}quations fonctionnelle de la fonction $L$ d'Artin 
d'une repres\'{e}entation orthogonale, Invent. Math. {\bf 35} (1976), 299-316. 


\bibitem{FV} I.B. Fesenko, S.V. Vostokov, Local fields and their extensions, Second edition 2002,
\url{https://www.maths.nottingham.ac.uk/personal/ibf/book/vol.pdf}.












\bibitem{MH}M. Hall, The theory of groups, The Macmillan Company, New York, Fourth Printing, 1963.
\bibitem{GH} G. Henniart, Galois $\epsilon$-factors modulo roots of unity, Invent. Math. {\bf 78}, 117-126 (1984).


\bibitem{IM}I.M. Isaacs, Finite group theory, Graduate Studies in Mathematics, Volume 92, AMS, 2008.
 
 
 
 \bibitem{BK} B. Kahn, La deuxi\`{e}me classe de Stiefel-Whitney d'une repr\'{e}sentation r\'{e}guli\`{e}re. I,
C. R. Acad. Sci. Paris S\'{e}r. I Math. 297 (1983), no. 6, 313-316.
\bibitem{BK2}\textemdash, La deuxi\`{e}me classe de Stiefel-Whitney d'une repr\'{e}sentation r\'{e}guli\`{e}re. II,
C. R. Acad. Sci. Paris S\'{e}r. I Math. 297 (1983), no. 11, 573-576.

 \bibitem{GK} G. Karpilovsky, Group representations, Volume 1, part B: Introduction to group representations and characters,
 North-Holland Mathematics Studies, 175.
 
 














 
\bibitem{RL}R.P. Langlands, On the functional equation of the Artin $L$-functions, unpublished article, 
\url{https://publications.ias.edu/sites/default/files/a-ps.pdf}.

\bibitem{TS}T. Saito, Local constant of $\text{Ind}_{K}^{L}1$, Comment. Math. Helvetici {\bf 70} (1995),
507-5115.

\bibitem{JPS}J.-P. Serre, L'invariant de Witt de la forme $\text{Tr}(x^2)$, Comment. Math. Helvetici,
{\bf 59} (1984), 651-675.

\bibitem{JT1}J. Tate, Local Constants, Algebraic Number Fields
(L-functions and Galois properties), Proceedings of Symposium, Edited by A. Fr\"{o}hlich, pp. 89-131.
\bibitem{JT2}\textemdash, Number theoretic background, Proceedings of Symposia in Pure Mathematics, Vol. 33 (1979),
 Vol. {\bf 2}, pp. 3-26.
 
 
\bibitem{AW}A. Weil, Basic number theory, Third edition, Springer-Verlag, 1974.


\end{thebibliography}
\end{document}